\newcommand{\address}[1]{\gdef\@address{#1}}
\newcommand{\email}[1]{\gdef\@email{\url{#1}}}
\newcommand{\@endstuff}{\par\vspace{\baselineskip}\noindent\small
\begin{tabular}{@{}l}\scshape\@address\\\textit{E-mail address:} \@email\end{tabular}}
\newtheorem{df}{Definition}[section]
\newtheorem{rem}[df]{Remark}
\newtheorem{ex}[df]{Example}
\newtheorem{thm}[df]{Theorem}
\newtheorem{pp}[df]{Proposition} 
\newtheorem{lm}[df]{Lemma}
\newtheorem{cor}[df]{Corollary}
\newtheorem{thm intro}{Theorem}
\def\so{\mathfrak{so}}
\def\sl{\mathfrak{sl}}
\def\sp{\mathfrak{sp}}
\def\gg{\mathfrak{g}}
\def\hh{\mathfrak{h}}
\def\ss{\mathfrak{s}}
\def\gt{\tilde{\gg}}
\def\osp{\mathfrak{osp}}
\title{\Large{\textbf{Geometric properties of special orthogonal representations associated to exceptional Lie superalgebras}}}
\author{Philippe Meyer}
\address{Mathematical Institute, University of
Oxford, Oxford, Oxfordshire OX2 6GG, UK}
\email{philippe.meyer@maths.ox.ac.uk}
\date{\vspace{-2.5ex}}
\begin{document}
\maketitle

\begin{abstract}
From an octonion algebra $\mathbb{O}$ over a field $k$ of characteristic not two or three, we show that the fundamental representation ${\rm Im}(\mathbb{O})$ of the derivation algebra ${\rm Der}(\mathbb{O})$ and the spinor representation $\mathbb{O}$ of $\so({\rm Im}(\mathbb{O}))$ are special orthogonal representations. They have particular geometric properties coming from their similarities with binary cubics and we show that the covariants of these representations and their Mathews identities are related to the Fano plane and the affine space $(\mathbb{Z}_2)^3$. This also permits to give constructions of exceptional Lie superalgebras.
\end{abstract}

\section{Introduction}
\let\thefootnote\relax\footnote{\textit{Key words}: exceptional Lie superalgebra $\cdot$ covariant $\cdot$ spinor representation $\cdot$ octonions $\cdot$ Mathews identities.}
\let\thefootnote\relax\footnote{\textit{2020 Mathematics Subject Classification}: 17A75, 17B25, 17B60, 53C27.}

The space of binary cubics, a symplectic representation of the Lie algebra $\sl(2,k)$, has particular symplectic properties \cite{Eisenstein1844}, \cite{SlupinskiStanton2012}. It admits three covariants, among the Hessien and the discriminant, satisfying remarkable geometric identities \cite{Mathews11}. This representation is an example of a larger class of representations sharing these properties: the special $\epsilon$-orthogonal representations of colour Lie algebras \cite{StantonSlupinski15}, \cite{Meyer19Kostant}. The terminology special comes from their role in symplectic geometry \cite{CS09}. A special $\epsilon$-orthogonal representation $V$ of a colour Lie algebra $\gg$ can be extended to define a colour Lie algebra of the form
\begin{equation*}
    \gt=\gg\oplus \sl(2,k) \otimes V\otimes k^2.
\end{equation*}
In this way, special symplectic representations of Lie algebras give rise to Lie algebras and special orthogonal representations of Lie algebras give rise to Lie superalgebras.
\vspace{0.1cm}

In this paper, from an octonion algebra $\mathbb{O}$ over $k$, we show that
\begin{itemize}
    \item a one parameter family of $4$-dimensional representations of $\sl(2,k)\times \sl(2,k)$ ;
    \item the $7$-dimensional fundamental representation ${\rm Im}(\mathbb{O})$ of the Lie algebra ${\rm Der}(\mathbb{O})$ ;
    \item the $8$-dimensional spinor representation $\mathbb{O}$ of the Lie algebra $\so({\rm Im}(\mathbb{O}))$
\end{itemize}
are special orthogonal representations and give rise to exceptional Lie superalgebras of type $D(2,1;\alpha)$, $G_3$ and $F_4$ (in the Kac notation \cite{VKac1975}). This is similar to various constructions from Sudbery \cite{Sudbery83}, Kamiya and Okubo \cite{KamiyaOkubo03} and Elduque \cite{Elduque04}.
\vspace{0.1cm}

We explicitly compute the covariants of these representations. In particular, we give formulae of the moment maps of ${\rm Im}(\mathbb{O})$ and $\mathbb{O}$ and we show that the trilinear covariant of ${\rm Im}(\mathbb{O})$ is, up to a constant, the associator. The quadrilinear covariant of ${\rm Im}(\mathbb{O})$ admits a decomposition into a sum of $7$ decomposable forms which naturally correspond to the $7$ lines of the Fano plane and the two maps of the first Mathews identity are, up to constants, the Hodge duals of the cross-product on ${\rm Im}(\mathbb{O})$. Then we give a decomposition of the quadrilinear covariant of $\mathbb{O}$ into a sum of $14$ decomposable forms which naturally correspond to the $14$ affine planes of the affine space $(\mathbb{Z}_2)^3$. The two maps of the first Mathews identity are, up to a constant, the Hodge duals of the trilinear covariant of $\mathbb{O}$ and the two maps of the second Mathews identity are, up to constants, the Hodge duals of the moment map of $\mathbb{O}$.

For special orthogonal representations associated to basic classical Lie superalgebras and interpretation of their covariants, see the appendix of \cite{Meyer19Kostant}.

\subsection*{Acknowledgements} The author wants to express his gratitude to Marcus J. Slupinski for his suggestions and his encouragement. This work was partially supported by the Engineering and Physical Sciences Research Council grant [EP/N033922/1].

\subsection*{Notation} Let $k$ be a field of characteristic not two or three.
\vspace{0.2cm}

\noindent
For a finite-dimensional quadratic vector space $(V,q)$ and $i\in \mathbb{N}$ such that $i<char(k)$ if $0<char(k)$, we denote by $\eta : \Lambda^i(V)\rightarrow\Lambda^i(V)^*$ the canonical isomorphism given by the determinant and we consider the quadratic form $q_{\Lambda}$ (resp. $q_{\Lambda*}$) and the symmetric bilinear form $B_{\Lambda}$ associated by polarisation (resp. $B_{\Lambda*}$) on $\Lambda^i(V)$ (resp. $\Lambda^i(V)^*$) given by $\eta$.
\vspace{0.2cm}

\noindent
If $\lbrace e_i \rbrace$ is a basis of $V$, we denote $e_{i_1}\wedge\ldots\wedge e_{i_n}$ by $e_{i_1\ldots i_n}$.

\section{Lie superalgebras from special orthogonal representations}\label{section LSAs from special orthogonal rep}

In this section we explain how to construct a quadratic Lie superalgebra from an orthogonal representation of a quadratic Lie algebra, for details and proofs see \cite{Meyer19Kostant}. Let $(\gg,B_{\gg})$ be a finite-dimensional quadratic Lie algebra, let $(V,(\phantom{v},\phantom{v}))$ be a finite-dimensional quadratic vector space and let $\rho : \gg \rightarrow \so(V,(\phantom{v},\phantom{v}))$ be an orthogonal representation of $\gg$.
\vspace{0.1cm}

The moment map of the representation $\rho : \gg \rightarrow \so(V,(\phantom{v},\phantom{v}))$ is the $\gg$-equivariant alternating map $\mu\in {\rm Alt}_2(V,\gg)$ satisfying
$$B_{\gg}(x,\mu(v,w))=(\rho(x)(v),w) \qquad \forall x \in \gg, ~ \forall v,w \in V.$$
The standard example is the moment map of the fundamental representation of $\so(V,(\phantom{v},\phantom{v}))$:

\begin{ex}
Suppose that $\gg=\so(V,(\phantom{v},\phantom{v}))$ and $B_{\gg}(f,g)=-\frac{1}{2}Tr(fg)$ for all $f,g \in \so(V,(\phantom{v},\phantom{v}))$. The corresponding moment map $\mu_{can}\in{\rm Alt}_2(V,\so(V,(\phantom{v},\phantom{v})))$ satisfies
\begin{equation} \label{canonical moment map}
\mu_{can}(u,v)(w)=(u,w)v-(v,w)u \qquad \forall u,v,w\in V,
\end{equation}
and is a $\gg$-equivariant isomorphism between $\Lambda^2(V)$ and $\so(V,(\phantom{v},\phantom{v}))$. 
\end{ex}

We now define a particular class of orthogonal representations of quadratic Lie algebras:

\begin{df}
The representation $\rho : \gg \rightarrow \so(V,(\phantom{v},\phantom{v}))$ is said to be special orthogonal if
\begin{equation}\label{eq CS}
    \mu(u,v)(w)+\mu(u,w)(v)=(u,v)w+(u,w)v-2(v,w)u \qquad \forall u,v,w\in V.
\end{equation}
\end{df}

Special orthogonal representations can be extended to define Lie superalgebras as follows:

\begin{thm}\label{thm LSA from special orthogonal rep}
Let $\rho : \gg \rightarrow \so(V,(\phantom{v},\phantom{v}))$ be a finite-dimensional orthogonal representation of a finite-dimensional quadratic Lie algebra $(\gg,B_{\gg})$ and let $\sl(2,k)\rightarrow\sp(k^2,\omega)$ be the symplectic fundamental representation of the quadratic Lie algebra $(\sl(2,k),B_{\ss})$ where $\omega$ is the canonical symplectic form on $k^2$ and where $B_{\ss}(f,g)=\frac{1}{2}Tr(fg)$ for all $f,g\in \sl(2,k)$. Let  $\gt$ be the super vector space defined by
$$\gt:=\gg\oplus\sl(2,k)\oplus V\otimes k^2,$$
and let $B_{\gt}:=B_{\gg}\perp B_{\ss}\perp (\phantom{v},\phantom{v})\otimes \omega$. Then $(\gt,B_{\gt},\lbrace \phantom{v},\phantom{v} \rbrace)$ is a quadratic Lie superalgebra extending the bracket of $\gg\oplus\sl(2,k)$ and the action of $\gg\oplus\sl(2,k)$ on $V\otimes k^2$ if and only if $\rho : \gg \rightarrow \so(V,(\phantom{v},\phantom{v}))$ is a special orthogonal representation.
\end{thm}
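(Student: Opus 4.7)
The plan is to observe that once the bracket is fixed by equivariance and invariance of $B_{\gt}$, only the super Jacobi identity on three odd elements can fail, and to show that this identity translates directly into condition (\ref{eq CS}).

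First I would pin down the shape of the odd-odd bracket. Since $V\otimes k^2$ is the odd part, $\{v\otimes s,w\otimes t\}$ must be symmetric in its two arguments and lie in $\gg\oplus \sl(2,k)$. Bi-equivariance under $\gg\oplus\sl(2,k)$ forces its $\gg$-component to depend on $s,t$ only through $\omega(s,t)$ (the unique $\sl(2,k)$-invariant bilinear form on $k^2$) and its $\sl(2,k)$-component to depend on $v,w$ only through $(v,w)$. Invariance of $B_{\gt}$ then pins the remaining factors down to the moment maps, yielding
\begin{equation*}
\{v\otimes s,\, w\otimes t\}=\omega(s,t)\,\mu(v,w)+(v,w)\,\sigma(s,t),
\end{equation*}
where $\sigma\in \mathrm{Sym}_2(k^2,\sl(2,k))$ is the moment map of the symplectic representation $\sl(2,k)\to\sp(k^2,\omega)$, which one computes explicitly to be of the form $\sigma(s,t)(u)=-\omega(s,u)\,t-\omega(t,u)\,s$ (the symplectic analogue of (\ref{canonical moment map})). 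Symmetry of the bracket in the two odd arguments holds because $\omega$ and $\mu$ are both alternating while $(\cdot,\cdot)$ and $\sigma$ are both symmetric.

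Next I would run through the four types of super Jacobi identities. The (even, even, even) case holds because $\gg\oplus \sl(2,k)$ is a Lie algebra. The (even, even, odd) case holds because $V\otimes k^2$ is a representation of $\gg\oplus \sl(2,k)$. The (even, odd, odd) case is equivalent to the $\gg\oplus \sl(2,k)$-equivariance of the moment maps $\mu$ and $\sigma$, which is automatic from their definitions. Hence only the (odd, odd, odd) identity on three elements $v_i\otimes s_i$, $i=1,2,3$, remains. Expanding this cyclic sum using the ansatz and the formula for $\sigma$, one obtains an element of $V\otimes k^2$ of the form
\begin{equation*}
\sum_{\mathrm{cyc}}\Bigl[\omega(s_1,s_2)\,\rho(\mu(v_1,v_2))(v_3)\otimes s_3+(v_1,v_2)\,v_3\otimes \sigma(s_1,s_2)(s_3)\Bigr].
\end{equation*}
Specialising $s_1,s_2,s_3$ (for instance $s_1=s_3$ taken independent of $s_2$) isolates a linear combination of the $v_i$'s which must vanish; after reorganising using the Plücker identity $\omega(s_1,s_2)s_3+\omega(s_2,s_3)s_1+\omega(s_3,s_1)s_2=0$ in $k^2$, one checks that vanishing of the whole cyclic sum for all choices of $s_i$ is equivalent to the symmetrised identity (\ref{eq CS}) holding for all $v_1,v_2,v_3\in V$.

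The main obstacle is the bookkeeping in this last step: the expanded sum mixes terms valued in $V\otimes k^2$ that come from the $\gg$-action with terms that come from the $\sl(2,k)$-action, and the $2$-dimensionality of $k^2$ creates linear dependencies among the apparent monomials $\omega(s_i,s_j)s_k$ that must be exploited carefully before one can read off (\ref{eq CS}) as the clean equivalent condition. Once this identification is made, both directions of the "if and only if" follow at once, since the $v_i$'s and $s_i$'s are free.
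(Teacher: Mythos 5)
The paper itself does not prove Theorem \ref{thm LSA from special orthogonal rep}; Section \ref{section LSAs from special orthogonal rep} opens by deferring all details and proofs of the material there to \cite{Meyer19Kostant}. So there is no in-paper argument to compare yours against, and I assess your proposal on its own terms.

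Your strategy is correct and is the expected one. The reduction to the super Jacobi identity on three odd elements is right: the $(\bar{0},\bar{0},\bar{0})$ case is the Lie algebra axiom for $\gg\oplus\sl(2,k)$, the $(\bar{0},\bar{0},\bar{1})$ case is the module axiom for $V\otimes k^2$, and the $(\bar{0},\bar{1},\bar{1})$ case is exactly equivariance of $\mu$ and $\sigma$ together with the defining properties of the moment map (which you also use, correctly, to pin down the odd-odd bracket as $\{v\otimes s,w\otimes t\}=\omega(s,t)\mu(v,w)+(v,w)\sigma(s,t)$ via invariance of $B_{\gt}$). For the $(\bar{1},\bar{1},\bar{1})$ case, writing the cyclic Jacobi sum as
\begin{equation*}
A_1\otimes\bigl(\omega(s_2,s_3)s_1\bigr)+A_2\otimes\bigl(\omega(s_3,s_1)s_2\bigr)+A_3\otimes\bigl(\omega(s_1,s_2)s_3\bigr)
\end{equation*}
with $A_i\in V$ depending on the $v_j$, the Plücker relation $\omega(s_1,s_2)s_3+\omega(s_2,s_3)s_1+\omega(s_3,s_1)s_2=0$ says these three $k^2$-vectors span a $2$-dimensional space with one linear relation (for generic $s_i$), so the sum vanishes for all $s_i$ if and only if $A_1=A_2=A_3$; writing out $A_3-A_2=0$ gives precisely \eqref{eq CS}. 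This is exactly the reorganisation you describe. One small comment: the aside about "specialising $s_1=s_3$" is unnecessary and slightly distracting — the Plücker relation alone already yields the clean equivalence $A_1=A_2=A_3$, which is the crisp way to read off \eqref{eq CS} — but it does not affect the correctness of the argument.
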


In addition to the moment map, a trilinear and a quadrilinear alternating multilinear map can be naturally associated to a special orthogonal representation:

\begin{df} We define the multilinear alternating maps $\psi \in {\rm Alt}_3(V,V)$ and $Q\in {\rm Alt}_4(V,k)$ as follows:
\begin{align*}
\psi(v_1,v_2,v_3)&=\mu(v_1,v_2)(v_3)+\mu(v_3,v_1)(v_2)+\mu(v_2,v_3)(v_1), \\
Q(v_1,v_2,v_3,v_4)&=(v_1,\psi(v_2,v_3,v_4))-(v_4,\psi(v_1,v_2,v_3))+(v_3,\psi(v_4,v_1,v_2))-(v_2,\psi(v_3,v_4,v_1))
\end{align*}
for all $v_1,v_2,v_3,v_4 \in V$. The maps $\mu$, $\psi$ and $Q$ are called the covariants of $V$.
\end{df}

We have the following formulae:

\begin{pp}\label{pp formules lien BQ Bpsi dans le cas CS}
If $\rho : \gg \rightarrow \so(V,(\phantom{v},\phantom{v}))$ is special orthogonal, then we have
\begin{align*}
\psi(v_1,v_2,v_3)&=3(\mu(v_1,v_2)(v_3)-\mu_{can}(v_1,v_2)(v_3)), \\
Q(v_1,v_2,v_3,v_4)&=4(v_1,\psi(v_2,v_3,v_4)),
\end{align*}
for all $v_1,v_2,v_3,v_4 \in V$.
\end{pp}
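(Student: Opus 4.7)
The plan is to introduce the difference $\tilde{\mu} := \mu - \mu_{can}$ and exploit the symmetry forced on it by the special orthogonal axiom. First, I would check by direct substitution that $\mu_{can}$ itself satisfies (\ref{eq CS}): from (\ref{canonical moment map}), the sum $\mu_{can}(u,v)(w) + \mu_{can}(u,w)(v)$ expands to $(u,v)w + (u,w)v - 2(v,w)u$, exactly matching the right-hand side of (\ref{eq CS}). Subtracting gives $\tilde{\mu}(u,v)(w) + \tilde{\mu}(u,w)(v) = 0$, so $\tilde{\mu}$ is antisymmetric in the last two arguments. Since both $\mu$ and $\mu_{can}$ lie in ${\rm Alt}_2(V,\so(V))$, the map $\tilde{\mu}$ is also antisymmetric in its first two arguments; as adjacent transpositions generate $S_3$, the trilinear map $(u,v,w) \mapsto \tilde{\mu}(u,v)(w)$ is then fully alternating.

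For the first formula I substitute $\mu = \tilde{\mu} + \mu_{can}$ into the definition of $\psi$. The three $\tilde{\mu}$-contributions are cyclic permutations of a fully alternating trilinear expression, hence all equal, and their sum is $3\tilde{\mu}(v_1,v_2)(v_3)$. The three $\mu_{can}$-contributions expand via (\ref{canonical moment map}) into six terms $\pm(v_i,v_j)v_k$ which cancel in pairs. This gives $\psi(v_1,v_2,v_3) = 3(\mu(v_1,v_2)(v_3) - \mu_{can}(v_1,v_2)(v_3))$.

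For the second formula the strategy is to show that $R(v_1,v_2,v_3,v_4) := (v_1,\psi(v_2,v_3,v_4))$ is fully alternating in its four arguments: once this is established, each of the four summands in the definition of $Q$ is related to the first by a signed permutation of the indices, and a direct transposition count gives $Q = 4(v_1,\psi(v_2,v_3,v_4))$. Alternation in the last three slots of $R$ is immediate from the full alternation of $\tilde{\mu}$, hence of $\psi$. For alternation in the first two slots I use that $\tilde{\mu}(v_3,v_4) \in \so(V)$ — indeed $\rho(\gg) \subseteq \so(V)$ so $\mu(v_3,v_4)$ acts skew-adjointly, and $\mu_{can}(v_3,v_4) \in \so(V)$ by construction — which lets me transport $v_1$ inside $\tilde{\mu}$; one application of full alternation of $\tilde{\mu}$ followed by a second skew-adjointness step then yields $-R(v_2,v_1,v_3,v_4)$. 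The main obstacle is more conceptual than technical: one has to recognise that the special orthogonal axiom is precisely what promotes $\mu - \mu_{can}$ to a fully alternating trilinear form; after that, both identities reduce to elementary symmetry manipulations and transposition-sign bookkeeping, with no further computation.
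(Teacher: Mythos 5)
Your proof is correct, and it takes what is essentially the canonical route: the key observation that the special-orthogonal identity, together with the fact that $\mu_{can}$ itself satisfies \eqref{eq CS}, forces $\tilde\mu = \mu - \mu_{can}$ to be a fully alternating trilinear map into $V$. The paper does not reproduce a proof of this proposition (it defers to \cite{Meyer19Kostant}), but the very form of the first identity --- expressing $\psi$ as $3(\mu - \mu_{can})$ --- makes it clear that the cited argument hinges on exactly this decomposition, so you have reconstructed the intended proof rather than found an alternative. Both halves check out: the cancellation of the $\mu_{can}$-contributions in the cyclic sum defining $\psi$ is a direct computation from \eqref{canonical moment map}; and your derivation of the full alternation of $R(v_1,v_2,v_3,v_4)=(v_1,\psi(v_2,v_3,v_4))$, by alternately applying skew-adjointness of $\tilde\mu(\cdot,\cdot)\in\so(V)$ and full alternation of $\tilde\mu$, correctly yields $R(v_1,v_2,v_3,v_4)=-R(v_2,v_1,v_3,v_4)$, after which the four cyclic summands in the definition of $Q$ each carry the right sign ($4$-cycles being odd) to contribute $+R$, giving $Q=4R$.
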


For vector spaces $E,F,G,H$, the exterior product $f\wedge_{\phi} g\in{\rm Alt}_{p+q}(E,H)$ of $f\in {\rm Alt}_p(E,F)$ and $g\in {\rm Alt}_q(E,G)$ relative to a bilinear map $\phi : F\times G\rightarrow H$ is defined by
$$f\wedge_{\phi} g(v_1,\ldots,v_{p+q})=\sum_{\sigma\in S(\llbracket 1,p \rrbracket,\llbracket p+1,p+q\rrbracket)}sgn(\sigma)\phi(f(v_{\sigma(1)},\ldots,v_{\sigma(p)}),g(v_{\sigma(p+1)},\ldots,v_{\sigma(p+q)}))$$
where the sum is over the $(p,q)$-shuffle permutations in $S_{p+q}$. If $\phi$ is implicit, then we denote $f\wedge_{\phi} g$ by $f\wedge g$. The composition $f\circ g\in{\rm Alt}_{pq}(G,F)$ of $f\in {\rm Alt}_p(E,F)$ and $g\in {\rm Alt}_q(G,E)$ is defined by
$$f\circ g(v_1,\ldots,v_{pq})=\sum_{\sigma\in S(\llbracket 1,q \rrbracket,\ldots,\llbracket p(q-1)+1,pq\rrbracket)}sgn(\sigma)f(g(v_{\sigma(1)},\ldots,v_{\sigma(q)}),\ldots,g(v_{\sigma(p(q-1)+1)},\ldots,v_{\sigma(pq)}))$$
where the sum is over the $(q,\ldots,q)$-shuffle permutations in $S_{pq}$.
\vspace{0.3cm}

Covariants of special orthogonal representations satisfy to the following Mathews identities:

\begin{thm} \label{thm id mathews}
Let $\rho : \gg \rightarrow \so(V,(\phantom{v},\phantom{v}))$ be a finite-dimensional special orthogonal representation of a finite-dimensional quadratic Lie algebra and let $\mu\in {\rm Alt}_2(V,\gg)$, $\psi\in {\rm Alt}_3(V,V)$ and $Q\in {\rm Alt}_4(V,k)$ be its covariants. We have the following identities:
\begin{alignat}{4}
a)&\qquad\qquad\qquad\qquad&\mu\wedge_{\rho} \psi&=-\frac{3}{2}Q\wedge Id_V \quad &&\in {\rm Alt}_5(V,V), \label{id premat} \\
b)&\qquad\qquad\qquad\qquad&\mu \circ \psi &= 3Q \wedge \mu \quad &&\in {\rm Alt}_6(V,\gg), \label{id mat1} \\
c)&\qquad\qquad\qquad\qquad&\psi\circ \psi&=-\frac{27}{2} Q\wedge Q\wedge Id_V \quad &&\in {\rm Alt}_9(V,V), \label{id mat2} \\
d)&\qquad\qquad\qquad\qquad& Q\circ \psi&=-54 Q\wedge Q \wedge Q \quad &&\in {\rm Alt}_{12}(V,k). \label{id mat3}
\end{alignat}
\end{thm}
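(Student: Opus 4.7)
The plan is to prove identity (a) as the foundational ``pre-Mathews'' identity, from which (b), (c) and (d) follow by successive contractions using the structural relations $\psi = 3(\mu - \mu_{can})$ and $Q(v_1, v_2, v_3, v_4) = 4(v_1, \psi(v_2, v_3, v_4))$ of Proposition \ref{pp formules lien BQ Bpsi dans le cas CS}.

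For (a), I would start from the special orthogonal condition (\ref{eq CS}). Combined with the explicit formula $\mu_{can}(u,v)(w) = (u,w)v - (v,w)u$, this condition is equivalent to the statement that $f(u,v,w) := \mu(u,v)(w) - \mu_{can}(u,v)(w)$ is totally alternating in its three arguments, with $\psi = 3f$. I would then expand the left-hand side of (\ref{id premat}) as the $(2,3)$-shuffle sum
$$(\mu \wedge_{\rho} \psi)(v_1, \ldots, v_5) = \sum_{\sigma} sgn(\sigma)\, \rho(\mu(v_{\sigma(1)}, v_{\sigma(2)}))\bigl(\psi(v_{\sigma(3)}, v_{\sigma(4)}, v_{\sigma(5)})\bigr),$$
substitute $\psi = 3(\mu - \mu_{can})$ in the inner slot, and use the $\gg$-equivariance of $\mu$, that is $\rho(x)(\mu(u,v)(w)) = \mu(\rho(x)u, v)(w) + \mu(u, \rho(x)v)(w) + \mu(u,v)(\rho(x)w)$, to distribute the outer action. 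After full alternation over the five variables the $\mu\circ\mu$-type terms cancel in pairs by the Jacobi-type identity for $\mu$ (itself a consequence of $\mu$ being a moment map), while the residual terms involving $\mu_{can}$ collapse by direct application of (\ref{eq CS}) to produce expressions of the shape $(v_i, \psi(\ldots))\, v_j$. Matching against the $(4,1)$-shuffle sum for $Q \wedge Id_V$ and using $Q = 4(\cdot, \psi)$ then yields the factor $-\tfrac{3}{2}$.

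For (b), I would pair the proposed equality in ${\rm Alt}_6(V, \gg)$ with an arbitrary $x \in \gg$ via $B_{\gg}$. The moment map identity $B_{\gg}(x, \mu(u,v)) = (\rho(x)u, v)$ turns both sides into $k$-valued alternating expressions that can be matched using (a) contracted against $\rho(x)$, since $\rho(x) \in \so(V,(\phantom{v},\phantom{v}))$ is skew-symmetric. Identity (c) is obtained by computing $\psi \circ \psi$: substitute $\psi = 3(\mu - \mu_{can})$ in the outer slot and use (a) to handle the mixed $\mu(\mu_{can}(\cdot,\cdot,\cdot),\cdot,\cdot)$ terms, evaluating the remaining purely $\mu_{can}$ contribution directly from (\ref{canonical moment map}). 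Finally, (d) follows from (c) by taking an inner product with a twelfth vector and invoking $Q = 4(\cdot, \psi)$ to recast $Q \circ \psi$ as a scalar contraction of $\psi \circ \psi$.

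The main obstacle throughout is the combinatorial bookkeeping: the shuffle sums grow from $10$ terms in (a) to expressions involving $12$ variables in (d), and tracking the exact scalar constants through the repeated applications of (\ref{eq CS}) requires care. A cleaner conceptual route would be to work inside the extended Lie superalgebra $\gt$ of Theorem \ref{thm LSA from special orthogonal rep} and derive (a)--(d) from iterated super-Jacobi identities applied to elements of $V \otimes k^2$; however, the $\gg$- and $\sl(2,k)$-components of the resulting identities still have to be extracted explicitly, which essentially returns to the same calculation.
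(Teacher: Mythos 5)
The paper does not actually prove Theorem~\ref{thm id mathews}: Section~\ref{section LSAs from special orthogonal rep} opens with the statement that all results there are quoted from \cite{Meyer19Kostant}, where the proofs appear. So there is no in-paper argument to compare your proposal against, and your plan has to be assessed on its own merits.

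On that basis, the outline is plausible in spirit but has a real gap at the central step. You claim that after substituting $\psi = 3(\mu - \mu_{can})$ into $\mu\wedge_{\rho}\psi$ and expanding over the $(2,3)$-shuffles, ``the $\mu\circ\mu$-type terms cancel in pairs by the Jacobi-type identity for $\mu$ (itself a consequence of $\mu$ being a moment map).'' There is no Jacobi identity satisfied by the moment map of a generic orthogonal representation. What being a moment map gives you is $\gg$-equivariance, i.e.\ $[x,\mu(v,w)] = \mu(\rho(x)v,w)+\mu(v,\rho(x)w)$, and the ``closure'' relation $[\mu(v_1,v_2),\mu(v_3,v_4)] = \mu(\rho(\mu(v_1,v_2))v_3,v_4)+\mu(v_3,\rho(\mu(v_1,v_2))v_4)$ obtained by specialising $x=\mu(v_1,v_2)$. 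Neither of these is an alternating relation in the five variables, and it is not at all evident that the $\mu\circ\mu$ contribution vanishes upon antisymmetrisation; in fact the special orthogonal condition \eqref{eq CS} must enter again at this point, since equivariance alone holds for every orthogonal representation while the Mathews identities do not. Until you exhibit the precise alternating identity you are invoking and check that \eqref{eq CS} supplies it, part (a) is not proved, and since you derive (b)--(d) from (a) the whole chain is at risk. There is also a degree slip in your sketch of (d): you say it follows from (c) ``by taking an inner product with a twelfth vector,'' but passing from ${\rm Alt}_9(V,V)$ to ${\rm Alt}_{12}(V,k)$ requires pairing the $V$-valued output of $\psi\circ\psi$ against $\psi$ of three additional vectors and then antisymmetrising over all twelve slots; the shuffle bookkeeping and the resulting scalar factor are not routine and need to be carried out, not just gestured at.

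Your closing remark that a cleaner route would be to work inside $\gt = \gg\oplus\sl(2,k)\oplus V\otimes k^2$ and read off (a)--(d) from iterated super-Jacobi identities is, I think, the essential idea rather than a detour to be dismissed. Theorem~\ref{thm LSA from special orthogonal rep} says that the special orthogonal condition is \emph{exactly} the super-Jacobi identity for $\gt$, which is precisely the kind of alternating structural relation your cancellation argument is implicitly relying on. Extracting the $\gg$-, $\sl(2,k)$- and $V\otimes k^2$-components of iterated brackets $\{x_1,\{x_2,\{\ldots\}\}\}$ of odd elements is where the identities and their exact constants come from; treating that as ``essentially the same calculation'' underestimates the point, since it is there that the ``Jacobi-type identity'' you appeal to actually lives.
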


\section{A one-parameter family of special orthogonal representations of $\mathfrak{sl}(2,k)\times\mathfrak{sl}(2,k)$}\label{section one-parameter family of special orthogonal representations}

In this section we show that with respect to a one parameter family of invariant quadratic forms on $\sl(2,k)\times\sl(2,k)$, the tensor product of the two fundamental representations is a special orthogonal representation.
\vspace{0.2cm}

Let $(V,\omega_V)$ and $(W,\omega_W)$ be two-dimensional symplectic vector spaces. The vector space $V\otimes W$ is quadratic for the symmetric bilinear form $\omega_V\otimes \omega_W$ given by
$$\omega_V\otimes \omega_W(v_1\otimes w_1,v_2\otimes w_2)=-\omega_V(v_1,v_2)\omega_W(w_1,w_2) \qquad \forall v_1,v_2\in V, ~ \forall w_1,w_2\in W.$$
Consider the bilinear form $K_V$ (resp. $K_W$) on $\sp(V,\omega_V)$ (resp. $\sp(W,\omega_W)$) defined by $K_V(f,g)=\frac{1}{2}Tr(fg)$ (resp. $K_W(f,g)=\frac{1}{2}Tr(fg)$) for all $f,g \in \sp(V,\omega_V)$ (resp. $\sp(W,\omega_W)$). For $\alpha,\beta\in k^*$, we now consider the orthogonal representation
$$\sp(V,\omega_V)\times \sp(W,\omega_W)\rightarrow \so(V\otimes W, \omega_V\otimes \omega_W)$$
of the quadratic Lie algebra $(\sp(V,\omega_V)\times \sp(W,\omega_W),\frac{1}{\alpha}K_V\perp \frac{1}{\beta}K_W)$. Its moment map
$$\mu_{\alpha,\beta}:{\rm Alt}_2(V\otimes W,\sp(V,\omega_V)\times \sp(W,\omega_W))$$
satisfies
$$\mu_{\alpha,\beta}(v_1\otimes w_1,v_2\otimes w_2)=-\Big(\alpha \mu_V(v_1,v_2)\omega_W(w_1,w_2)+\beta \mu_W(w_1,w_2)\omega_V(v_1,v_2)\Big)\qquad \forall v_1,v_2\in V, ~ \forall w_1,w_2\in W,$$
where $\mu_i : S^2(V_i) \rightarrow \sp(V_i,\omega_i)$ is the canonical symmetric moment map given by
$$\mu_i(v_1,v_2)(v_3)=-\omega_i(v_1,v_3)v_2-\omega_i(v_2,v_3)v_1 \qquad \forall v_1,v_2,v_3 \in V_i.$$

\begin{pp}
The orthogonal representation
$$\sp(V,\omega_V)\times \sp(W,\omega_W)\rightarrow \so(V\otimes W, \omega_V\otimes \omega_W)$$
of the quadratic Lie algebra $(\sp(V,\omega_V)\times \sp(W,\omega_W),\frac{1}{\alpha}K_V\perp \frac{1}{\beta}K_W)$ is a special orthogonal representation if and only if $\alpha+\beta=-1$.
\end{pp}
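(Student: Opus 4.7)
My plan is to verify the defining identity \eqref{eq CS} by direct computation on decomposable tensors, exploiting the fact that $\dim V=\dim W=2$.

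Since both sides of \eqref{eq CS} are $k$-multilinear in $(u,v,w)$, it is enough to verify the identity for $u=v_1\otimes w_1$, $v=v_2\otimes w_2$, $w=v_3\otimes w_3$. To keep the formulae readable, I would set $a_{ij}=\omega_V(v_i,v_j)$ and $b_{ij}=\omega_W(w_i,w_j)$, so that $(u,v)=-a_{12}b_{12}$, $(u,w)=-a_{13}b_{13}$ and $(v,w)=-a_{23}b_{23}$, and then substitute the formulas for $\mu_{\alpha,\beta}$, $\mu_V$, $\mu_W$. This expands $\mu_{\alpha,\beta}(u,v)(w)+\mu_{\alpha,\beta}(u,w)(v)$ as a combination of the six ``off-diagonal'' simple tensors $v_i\otimes w_j$ with $i\ne j$, in which the terms $v_2\otimes w_3$ and $v_3\otimes w_2$ carry the coefficient $(\alpha+\beta)a_{13}b_{12}$ and $(\alpha+\beta)a_{12}b_{13}$, while the remaining four $v_1\otimes w_3,\ v_3\otimes w_1,\ v_1\otimes w_2,\ v_2\otimes w_1$ carry a pure $\alpha$ or $\beta$ multiple.

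The right-hand side of \eqref{eq CS}, on the other hand, is a combination of the three ``diagonal'' tensors $v_1\otimes w_1$, $v_2\otimes w_2$, $v_3\otimes w_3$. The key step to reconcile the two sides is the Plücker relation in dimension $2$: any three vectors in $V$ (resp.\ $W$) satisfy
\begin{equation*}
a_{23}v_1-a_{13}v_2+a_{12}v_3=0,\qquad b_{23}w_1-b_{13}w_2+b_{12}w_3=0.
\end{equation*}
Tensoring these two relations gives a linear dependence among the nine simple tensors $v_i\otimes w_j$ that precisely converts ``off-diagonal'' terms into ``diagonal'' ones. A direct coefficient comparison then shows that the difference of the two sides equals $(\alpha+\beta+1)$ times this Plücker expression (and hence vanishes) iff $\alpha+\beta=-1$.

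For the \emph{necessity} direction it is actually faster to test one well-chosen triple. Let $(e_1,e_2)$ and $(f_1,f_2)$ be symplectic bases of $V$ and $W$ and take $u=e_1\otimes f_1$, $v=e_1\otimes f_2$, $w=e_2\otimes f_1$. A quick calculation yields $\mu_{\alpha,\beta}(u,v)(w)+\mu_{\alpha,\beta}(u,w)(v)=2(\alpha+\beta)\,e_1\otimes f_1$ while the right-hand side of \eqref{eq CS} equals $-2\,e_1\otimes f_1$, forcing $\alpha+\beta=-1$. The only real obstacle is the bookkeeping for the sufficiency direction: the LHS naturally lives on the six ``off-diagonal'' basis vectors and the RHS on the three ``diagonal'' ones, and matching them requires invoking the Plücker relation rather than comparing coefficients freely.
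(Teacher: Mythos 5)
Your plan is the same as the paper's: check identity~\eqref{eq CS} on decomposable tensors $v_i\otimes w_i$, exploiting the two--dimensionality of $V$ and $W$. The necessity step (testing $u=e_1\otimes f_1$, $v=e_1\otimes f_2$, $w=e_2\otimes f_1$) is correct and gives $2(\alpha+\beta)=-2$, hence $\alpha+\beta=-1$, as claimed.

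The sufficiency step, however, has a real gap. You claim that the difference of the two sides of \eqref{eq CS} equals $(\alpha+\beta+1)$ times the tensored Pl\"ucker expression
\[
P:=(a_{23}v_1-a_{13}v_2+a_{12}v_3)\otimes(b_{23}w_1-b_{13}w_2+b_{12}w_3),
\]
as a formal combination of the nine symbols $v_i\otimes w_j$. This is false. Taking $a_{12}=b_{12}=1$ and computing the coefficients of the difference ${\rm LHS}-{\rm RHS}$: the coefficient of $v_1\otimes w_1$ is $-2a_{23}b_{23}$ while $P$ has $a_{23}b_{23}$ there (ratio $-2$); the coefficient of $v_2\otimes w_2$ is $a_{13}b_{13}$ while $P$ has $a_{13}b_{13}$ (ratio $1$). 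Since $-2\neq 1$, the difference is not a scalar multiple of $P$. The point is that the five independent relations coming from $v_3=av_1+bv_2$ and $w_3=cw_1+dw_2$ are \emph{not} all captured by the single tensored Pl\"ucker relation; you must also use the intermediate relations such as $v_3\otimes w_j=av_1\otimes w_j+bv_2\otimes w_j$ and $v_i\otimes w_3=cv_i\otimes w_1+dv_i\otimes w_2$. The paper's proof does exactly this: it substitutes $v_3=av_1+bv_2$, $w_3=cw_1+dw_2$ everywhere, reduces both sides to combinations of the genuine basis $\{v_i\otimes w_j:i,j\in\{1,2\}\}$, and compares coefficients (each of which yields $\alpha+\beta=-1$). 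To repair your argument, either carry out that full substitution, or argue directly that both sides of \eqref{eq CS}, viewed as elements of the $4$-dimensional space $V\otimes W$, have coefficients $\mp(\alpha+\beta)ac$, $\pm(\alpha+\beta)ad$, $\pm(\alpha+\beta)bc$, $\mp 2(\alpha+\beta)bd$ and $ac,-ad,-bc,-2bd$ on the basis $v_i\otimes w_j$, $i,j\in\{1,2\}$.
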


\begin{proof}
Let $v_1\otimes w_1,v_2\otimes w_2, v_3\otimes w_3 \in V\otimes W$. We want to know under what conditions on $\alpha$ and $\beta$ do we have
\begin{align} \label{CS sl2xsl2}
&\mu_{\alpha,\beta}(v_1\otimes w_1,v_2\otimes w_2)(v_3\otimes w_3)+\mu_{\alpha,\beta}(v_1\otimes w_1,v_3\otimes w_3)(v_2\otimes w_2)=\omega_V\otimes \omega_W(v_1\otimes w_1,v_2\otimes w_2)v_3\otimes w_3 \nonumber \\
&+\omega_V\otimes \omega_W(v_1\otimes w_1,v_3\otimes w_3)v_2\otimes w_2-2\omega_V\otimes \omega_W(v_2\otimes w_2,v_3\otimes w_3)v_1\otimes w_1.
\end{align}
Since $V$ and $W$ are two-dimensional (and after a permutation of $v_1,v_2,v_3$ or $w_1,w_2,w_3$ if necessary) we have $v_3=av_1+bv_2$ and $w_3=cw_1+dw_2$ where $a,b,c,d \in k$. Hence we have
\begin{align*}
&\omega_V\otimes \omega_W(v_1\otimes w_1,v_2\otimes w_2)v_3\otimes w_3+\omega_V\otimes \omega_W(v_1\otimes w_1,v_3\otimes w_3)v_2\otimes w_2-2\omega_V\otimes \omega_W(v_2\otimes w_2,v_3\otimes w_3)v_1\otimes w_1 \\
&=\omega_V(v_1,v_2)\omega_W(w_1,w_2)\Big(-av_1\otimes dw_2-bv_2\otimes cw_1-2bv_2\otimes dw_2+av_1\otimes cw_1 \Big)
\end{align*}
On the other hand we have
\begin{align*}
&\mu_{\alpha,\beta}(v_1\otimes w_1,v_2\otimes w_2)(v_3\otimes w_3)+\mu_{\alpha,\beta}(v_1\otimes w_1,v_3\otimes w_3)(v_2\otimes w_2)\\
&=-\Big(\alpha\mu_V(v_1,v_2)(v_3)\otimes \omega_W(w_1,w_2)w_3+\beta\omega_V(v_1,v_2)v_3\otimes \mu_W(w_1,w_2)(w_3)+\alpha\mu_V(v_1,v_3)(v_2)\otimes \omega_W(w_1,w_3)w_2\\
&+\beta\omega_V(v_1,v_3)v_2\otimes \mu_W(w_1,w_3)(w_2)\Big)\\
&=(\alpha+\beta)\omega_V(v_1,v_3)v_2\otimes \omega_W(w_1,w_2)w_3+(\alpha+\beta)\omega_V(v_1,v_2)v_3\otimes \omega_W(w_1,w_3)w_2+\alpha\omega_V(v_2,v_3)v_1\otimes \omega_W(w_1,w_2)w_3\\
&+\alpha\omega_V(v_3,v_2)v_1\otimes \omega_W(w_1,w_3)w_2+\beta\omega_V(v_1,v_2)v_3\otimes \omega_W(w_2,w_3)w_1+\beta\omega_V(v_1,v_3)v_2\otimes \omega_W(w_3,w_2)w_1\\
&=(\alpha+\beta)\omega_V(v_1,v_2)bv_2\otimes \omega_W(w_1,w_2)cw_1+(\alpha+\beta)\omega_V(v_1,v_2)bv_2\otimes \omega_W(w_1,w_2)dw_2\\
&+(\alpha+\beta)\omega_V(v_1,v_2)av_1\otimes \omega_W(w_1,w_2)dw_2+(\alpha+\beta)\omega_V(v_1,v_2)bv_2\otimes \omega_W(w_1,w_2)dw_2\\
&+\alpha\omega_V(v_2,v_1)av_1\otimes \omega_W(w_1,w_2)cw_1+\alpha\omega_V(v_2,v_1)av_1\otimes \omega_W(w_1,w_2)dw_2+\alpha\omega_V(v_1,v_2)av_1\otimes \omega_W(w_1,w_2)dw_2\\
&+\beta\omega_V(v_1,v_2)av_1\otimes \omega_W(w_2,w_1)cw_1+\beta\omega_V(v_1,v_2)bv_2\otimes \omega_W(w_2,w_1)cw_1+\beta\omega_V(v_1,v_2)bv_2\otimes \omega_W(w_1,w_2)cw_1\\
&=(\alpha+\beta)\omega_V(v_1,v_2)\omega_W(w_1,w_2)\Big( bv_2\otimes cw_1+2bv_2\otimes dw_2+av_1\otimes dw_2-av_1\otimes cw_1\Big).
\end{align*}
Hence, Equation \eqref{CS sl2xsl2} is satisfied if and only if $\alpha+\beta=-1$ and so the representation $\sp(V,\omega_V)\times \sp(W,\omega_W)\rightarrow \so(V\otimes W, \omega_V\otimes \omega_W)$ is special orthogonal if and only if $\alpha+\beta=-1$.
\end{proof}
\vspace{0.2cm}

Suppose that $\alpha+\beta=-1$. By the previous proposition and Theorem \ref{thm LSA from special orthogonal rep} we have a Lie superalgebra $\gt_{\alpha}$ of the form
$$\gt_{\alpha}=\sp(V,\omega_V)\oplus \sp(W,\omega_W)\oplus \sl(2,k)\oplus V\otimes W \otimes k^2.$$
This a simple Lie superalgebra of type $D(2,1;\alpha)$ which is an exceptional simple Lie superalgebra if $\alpha$ is not equal to $-\frac{1}{2},-2$ or $1$.
\vspace{0.2cm}

\begin{rem}
\begin{enumerate}[label=\alph*)]
\item In \cite{Serganova1983}, Serganova shows that there are three families of simple real Lie superalgebras which are real forms of $D(2,1;\alpha)$ (see also \cite{Parker80} for a discussion about the real forms of $D(2,1;\alpha)$). If $k=\mathbb{R}$, the family $\gt_{\alpha}$ defined above corresponds to one these families.
\item There is a symmetry exchanging $\alpha$ and $\beta$. Hence, the special orthogonal representations $\sp(V,\omega_V)\times \sp(W,\omega_W)\rightarrow \so(V\otimes W, \omega_V\otimes \omega_W)$ of the quadratic Lie algebras $(\sp(V,\omega_V)\times \sp(W,\omega_W),\frac{1}{\alpha}K_V\perp \frac{1}{-1-\alpha}K_W)$ and $(\sp(V,\omega_V)\times \sp(W,\omega_W),\frac{1}{-1-\alpha}K_V\perp \frac{1}{\alpha}K_W)$ give rise to isomorphic Lie superalgebras $\gt_{\alpha}$ and $\gt_{-1-\alpha}$.
\item There is a singular case when $\alpha=\beta=-\frac{1}{2}$. The Lie algebra $\sp(V,\omega_V)\times\sp(W,\omega_W)$ is isomorphic to $\so(W_0,(\phantom{v},\phantom{v}))$, where $(W_0,(\phantom{v},\phantom{v}))$ is a four-dimensional hyperbolic vector space, and under this isomorphism, the quadratic form $\frac{1}{\alpha}K_V+\frac{1}{\alpha}K_W$ of $\sp(V,\omega_V)\times\sp(W,\omega_W)$ is isometric to the quadratic form $-\frac{1}{2}Tr(fg)$ for all $f,g \in \so(W_0,(\phantom{v},\phantom{v}))$.  Hence, we have that $\gt_{-\frac{1}{2}}$ is isomorphic to $\osp(W_0\oplus W_1,(\phantom{v},\phantom{v})\perp \omega)$ where $(W_1,\omega)$ is a two-dimensional symplectic vector space.
\end{enumerate}
\end{rem}

We now study the trilinear covariant and the quadrilinear covariant of the special orthogonal representation $\sp(V,\omega_V)\times \sp(W,\omega_W)\rightarrow \so(V\otimes W, \omega_V\otimes \omega_W)$. Note that the Mathews identities of Theorem \ref{thm id mathews} vanish identically because $V\otimes W$ is of dimension four.

\begin{pp}
Suppose that $\alpha+\beta=-1$. The trilinear covariant $\psi\in {\rm Alt}_3(V\otimes W,V\otimes W)$ and the quadrilinear covariant $Q\in {\rm Alt}_4(V\otimes W,k)$ of the special orthogonal representation
$$\sp(V,\omega_V)\times \sp(W,\omega_W)\rightarrow \so(V\otimes W, \omega_V\otimes \omega_W)$$
satisfies:
\begin{align*}
    \psi(v_1\otimes w_1,v_2\otimes w_2,v_3\otimes w_3)=&3(2\alpha+1)\Big( \omega_V(v_1,v_3)v_2\otimes \omega_W(w_3,w_2)w_1+\omega_V(v_2,v_3)v_1\otimes\omega_W(w_1,w_3)w_2\Big),\\
    Q(v_1\otimes w_1,v_2\otimes w_2,v_3\otimes w_3,v_4\otimes w_4)=&-12(2\alpha+1)\Big(\omega_V(v_2,v_4)\omega_V(v_1,v_3)\omega_W(w_4,w_3)\omega_W(w_1,w_2)\\
    &+\omega_V(v_3,v_4)\omega_V(v_1,v_2)\omega_W(w_2,w_4)\omega_W(w_1,w_3)\Big),
\end{align*}
for all $v_1,v_2,v_3,v_4\in V$, $w_1,w_2,w_3,w_4\in W$. 
\end{pp}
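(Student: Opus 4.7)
The plan is to apply Proposition \ref{pp formules lien BQ Bpsi dans le cas CS}, which reduces everything to an explicit computation. By that proposition
$$\psi(x_1,x_2,x_3)=3\bigl(\mu_{\alpha,\beta}(x_1,x_2)(x_3)-\mu_{can}(x_1,x_2)(x_3)\bigr)\quad\text{and}\quad Q(x_1,x_2,x_3,x_4)=4\,\omega_V\!\otimes\!\omega_W(x_1,\psi(x_2,x_3,x_4)),$$
so only the first formula requires real work; the formula for $Q$ then follows by contracting with $-\omega_V\otimes\omega_W$ on the first slot and re-using the computation of $\psi$.

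To compute $\psi$ on $x_i=v_i\otimes w_i$, I would first expand $\mu_{\alpha,\beta}(v_1\otimes w_1,v_2\otimes w_2)(v_3\otimes w_3)$ using the explicit formula for $\mu_{\alpha,\beta}$ given just before the proposition together with $\mu_V(v_1,v_2)(v_3)=-\omega_V(v_1,v_3)v_2-\omega_V(v_2,v_3)v_1$ and the analogous formula for $\mu_W$. Then I would compute $\mu_{can}(v_1\otimes w_1,v_2\otimes w_2)(v_3\otimes w_3)$ from (\ref{canonical moment map}) with the bilinear form $-\omega_V\otimes\omega_W$, obtaining
$$\mu_{can}(v_1\otimes w_1,v_2\otimes w_2)(v_3\otimes w_3)=-\omega_V(v_1,v_3)\omega_W(w_1,w_3)\,v_2\otimes w_2+\omega_V(v_2,v_3)\omega_W(w_2,w_3)\,v_1\otimes w_1.$$

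The key simplification step exploits the two-dimensionality of $V$ and $W$ via the Plücker-type identities
$$\omega_V(v_1,v_2)v_3+\omega_V(v_2,v_3)v_1+\omega_V(v_3,v_1)v_2=0$$
(and the analogous identity in $W$). Using these, the $\alpha$-part of $\mu_{\alpha,\beta}$ and the piece of $\mu_{can}$ supported on $v_1\otimes w_1$ and $v_2\otimes w_2$ can be brought onto the same "basis" of tensors of the form $v_i\otimes w_j$ ($i\ne j$), and similarly for the $\beta$-part. After substituting $\beta=-1-\alpha$, the coefficients collapse to multiples of $2\alpha+1=\alpha-\beta$, yielding exactly the claimed expression for $\psi$.

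The main (only) obstacle is the bookkeeping: there are many terms and one must pick a consistent strategy for applying the Plücker identities so that cancellations are visible. A clean way is to reduce every term to a tensor $v_i\otimes w_j$ with $i\ne j$ (treating $v_1,v_2,w_1,w_2$ as the "basis" and expressing $v_3,w_3$ via the Plücker identities); every contribution proportional to $\alpha+\beta+1$ then vanishes and the surviving coefficient is $\alpha-\beta=2\alpha+1$. Once $\psi$ is in hand, the formula for $Q$ follows by a short direct calculation from $Q(v_1\otimes w_1,\dots)=-4\,\omega_V(v_1,\cdot)\omega_W(w_1,\cdot)$ applied to $\psi(v_2\otimes w_2,v_3\otimes w_3,v_4\otimes w_4)$, with the two terms of $\psi$ producing the two terms of $Q$.
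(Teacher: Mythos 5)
Your proposal is correct, and it diverges from the paper in one meaningful but small way. The paper computes $\psi$ from its definition as the cyclic sum $\mu_{\alpha,\beta}(x_1,x_2)(x_3)+\mu_{\alpha,\beta}(x_3,x_1)(x_2)+\mu_{\alpha,\beta}(x_2,x_3)(x_1)$, which requires expanding $\mu_{\alpha,\beta}$ three times and then summing; it only invokes Proposition \ref{pp formules lien BQ Bpsi dans le cas CS} to pass from $\psi$ to $Q$. You instead invoke Proposition \ref{pp formules lien BQ Bpsi dans le cas CS} for $\psi$ as well, writing $\psi = 3(\mu_{\alpha,\beta}-\mu_{can})$, so you only have to expand $\mu_{\alpha,\beta}$ once and subtract the very short expression for $\mu_{can}$. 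The two approaches exploit the same ingredient (two-dimensionality of $V$ and $W$; your Plücker identities are exactly the paper's substitutions $v_3=av_1+bv_2$, $w_3=cw_1+dw_2$ in disguise), but your route is somewhat more economical: after applying the Plücker identity in each factor, the $(\alpha+\beta)$-weighted coefficients land precisely on the $v_1\otimes w_1$ and $v_2\otimes w_2$ components, which is where $\mu_{can}$ is supported, so the subtraction kills them and only the $(\alpha-\beta)=2\alpha+1$ terms on $v_1\otimes w_2$ and $v_2\otimes w_1$ survive. This makes the cancellation structure more transparent than in the three-term cyclic sum. Your passage from $\psi$ to $Q$ is identical to the paper's. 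The only thing I would ask you to make explicit when writing this up is the sign bookkeeping for $\omega_V\otimes\omega_W$ (the minus sign in its definition), which you note parenthetically but which is easy to drop.
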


\begin{proof}
Let $v_1\otimes w_1,v_2\otimes w_2,v_3\otimes w_3 \in V\otimes W$. Since $V$ and $W$ are two-dimensional (and after a permutation of $v_1,v_2,v_3$ or $w_1,w_2,w_3$ if necessary) we have $v_3=av_1+bv_2$ and $w_3=cw_1+dw_2$ where $a,b,c,d \in k$. We have
\begin{align}
\mu_{\alpha,\beta}(v_1\otimes w_1,v_2\otimes w_2)(v_3\otimes w_3)&=(\alpha+\beta)\omega_V(v_2,v_3)v_1\otimes\omega_W(w_3,w_2)w_1+(\alpha+\beta)\omega_V(v_1,v_3)v_2\otimes\omega_W(w_1,w_3)w_2\nonumber\\
&+(\alpha-\beta)\omega_V(v_2,v_3)v_1\otimes\omega_W(w_1,w_3)w_2+(\alpha-\beta)\omega_V(v_1,v_3)v_2\otimes\omega_W(w_3,w_2)w_1,\label{Bpsi D21 rel 1}\\
\mu_{\alpha,\beta}(v_2\otimes w_2,v_3\otimes w_3)(v_1\otimes w_1)&=(\alpha+\beta)\omega_V(v_2,v_3)v_1\otimes\omega_W(w_2,w_3)w_1+2\alpha\omega_V(v_3,v_1)v_2\otimes\omega_W(w_2,w_3)w_1\nonumber\\
&+2\beta\omega_V(v_2,v_3)v_1\otimes\omega_W(w_3,w_1)w_2,\label{Bpsi D21 rel 2}\\
\mu_{\alpha,\beta}(v_3\otimes w_3,v_1\otimes w_1)(v_2\otimes w_2)&=(\alpha+\beta)\omega_V(v_1,v_3)v_2\otimes\omega_W(w_3,w_1)w_2+2\alpha\omega_V(v_3,v_2)v_1\otimes\omega_W(w_3,w_1)w_2\nonumber\\
&+2\beta\omega_V(v_3,v_1)v_2\otimes\omega_W(w_3,w_2)w_1.\label{Bpsi D21 rel 3}
\end{align}
Hence, summing Equations \eqref{Bpsi D21 rel 1}, \eqref{Bpsi D21 rel 2} and \eqref{Bpsi D21 rel 3}, we obtain
$$\psi(v_1\otimes w_1,v_2\otimes w_2,v_3\otimes w_3)=3(\alpha-\beta)\Big( \omega_V(v_1,v_3)v_2\otimes \omega_W(w_3,w_2)w_1+\omega_V(v_2,v_3)v_1\otimes\omega_W(w_1,w_3)w_2\Big).$$
The formula for $Q$ follows by Proposition \ref{pp formules lien BQ Bpsi dans le cas CS}
\end{proof}

\begin{rem}
For the singular case $\alpha=-\frac{1}{2}$, we have that the covariants $\psi$ and $Q$ vanish identically. It means that the representation $\sp(V,\omega_V)\times \sp(W,\omega_W)\rightarrow \so(V\otimes W, \omega_V\otimes \omega_W)$ is of $\mathbb{Z}_2$-Lie type in the sense of Kostant \cite{Kostant99} and then can be extended to define a Lie algebra structure on $\sp(V,\omega_V)\oplus \sp(W,\omega_W) \oplus V\otimes W$. This Lie algebra is isomorphic to the orthogonal Lie algebra $\so(V\otimes W \oplus L,\omega_V\otimes \omega_W\perp (\phantom{u},\phantom{v})_L)$ where $(L,(\phantom{u},\phantom{v})_L)$ is a one-dimensional quadratic vector space.
\end{rem}

\section{The fundamental representation of $G_2$ is special orthogonal}\label{section G3}


In this section, we show that the irreducible $7$-dimensional fundamental representation of an exceptional Lie algebra $\gg$ of type $G_2$ is special orthogonal. To do this we realise $\gg$ as the derivation algebra of an octonion algebra $\mathbb{O}$ and use octonionic calculations. We first recall some properties of the octonions, for details and proofs see \cite{Schafer66} and \cite{Springer00}.
\vspace{0.2cm}

Let $\mathbb{O}$ be an octonion (or Cayley) algebra over $k$. This is a 8-dimensional unital composition algebra, the conjugation $\bar{\phantom{v}}$ satisfies $q(u)=u\bar{u}$ for all $u\in \mathbb{O}$, where $q$ is the norm of $\mathbb{O}$, and we have $\mathbb{O}=k\oplus {\rm Im}(\mathbb{O})$, where ${\rm Im}(\mathbb{O})=\lbrace u \in \mathbb{O} ~ | ~ \bar{u}=-u \rbrace$. Denote $B$ the symmetric bilinear form associated by polarisation to $q$. Let $e_1,e_2,e_4 \in {\rm Im}(\mathbb{O})$ be such that $\mathcal{B}=\lbrace e_1,e_2,e_1e_2,e_4,e_1e_4,e_2e_4,(e_1e_2)e_4 \rbrace$ is an orthogonal and anisotropic basis of ${\rm Im}(\mathbb{O})$ and set $e_3:=e_1e_2$, $e_5:=e_1e_4$, $e_6:=e_2e_4$, $e_7:=(e_1e_2)e_4$. This basis is related to the Fano plane:
\begin{center}
\begin{tikzpicture}
\draw (0,0) -- (4,0);
\draw (0,0) -- (2,3.46410161514);
\draw (2,3.46410161514) -- (4,0);
\draw (2,1.15470053838) circle (1.15470053838) ;
\draw (2,3.46410161514) -- (2,0);
\draw (0,0) -- (3,1.73205080757);
\draw (4,0) -- (1,1.73205080757);
\draw (0,0) node[below]{$6$};
\draw (2,0) node[below]{$3$};
\draw (4,0) node[below]{$5$};
\draw (1,1.73205080757) node[left]{$1$};
\draw (3,1.73205080757) node[right]{$2$};
\draw (2,3.46410161514) node[above]{$7$};
\draw (2.2,1.15470053838) node[right]{$4$};
\end{tikzpicture}
\end{center}
in the sense that, for $i\neq j$, the product between $e_i$ and $e_j$ is a multiple of $e_k$ where $k$ is the third point on the line going through $i$ and $j$.
\vspace{0.2cm}

The commutator and the associator are the alternating maps given by:
\begin{align*}
[u,v]&=uv-vu,\\
(u,v,w)&=(uv)w-u(vw)
\end{align*}
for all $u,v,w\in \mathbb{O}$. The commutator doesn't define a Lie algebra structure on $\mathbb{O}$ since the Jacobi tensor $J$ satisfies
\begin{equation}\label{eq Jacobi G2}
J(u,v,w)=[u,[v,w]]+[v,[w,u]]+[w,[u,v]]=-6(u,v,w) \qquad \forall u,v,w\in \mathbb{O}.
\end{equation}
There is a cross-product on $\mathbb{O}$ defined by
$$u\times v=\frac{1}{2}(\bar{v}u-\bar{u}v)\qquad \forall u,v\in\mathbb{O},$$
and we have
\begin{alignat}{2}
q(u\times v)&=q_{\Lambda}(u\wedge v)=q(u)q(v)-B(u,v)^2 \qquad &&\forall u,v\in \mathbb{O},\label{eq length cross product}\\
u\times v&=\frac{1}{2}[u,v]=uv+B(u,v) \qquad &&\forall u,v,w\in {\rm Im}(\mathbb{O}),\label{eq link product and cross product}\\
u\times(v\times w)+v\times(u\times w)&=B(v,w)u+B(u,w)v-2B(u,v)w \qquad &&\forall u,v,w\in {\rm Im}(\mathbb{O}).\label{eq Malcev id}
\end{alignat}
The associative form $\phi$ on ${\rm Im}(\mathbb{O})$ is the trilinear alternating form defined by
$$\phi(u,v,w)=B(u,v\times w) \qquad \forall u,v,w\in {\rm Im}(\mathbb{O}),$$
and we have
\begin{align}
    \eta^{-1}(\phi)&=\frac{1}{q(e_1)q(e_2)}e_{123}-\frac{1}{q(e_1)q(e_2)q(e_4)}e_{167}+\frac{1}{q(e_1)q(e_2)q(e_4)}e_{257}-\frac{1}{q(e_1)q(e_2)q(e_4)}e_{356}\nonumber\\
    &+\frac{1}{q(e_1)q(e_4)}e_{145}+\frac{1}{q(e_2)q(e_4)}e_{246}+\frac{1}{q(e_1)q(e_2)q(e_4)}e_{347}.\label{decomp phi G2}
\end{align}

\vspace{0.2cm}

Let $\rho : {\rm Im}(\mathbb{O}) \rightarrow {\rm End}(\mathbb{O})$ be the map defined by $\rho(u)(x)=ux$ for $u \in {\rm Im}(\mathbb{O})$ and $x\in \mathbb{O}$. We have
$$\rho(u)^2=-q(u)Id \qquad \forall u\in {\rm Im}(\mathbb{O})$$
and so $\rho$ extends to the Clifford algebra $C({\rm Im}(\mathbb{O}),-q)$. The quantisation map $Q:\Lambda({\rm Im}(\mathbb{O}))\rightarrow C({\rm Im}(\mathbb{O}),-q)$ is an $O({\rm Im}(\mathbb{O}),B)$-equivariant isomorphism of vector spaces and then we have $C({\rm Im}(\mathbb{O}),-q)=\bigoplus\limits_i C^i({\rm Im}(\mathbb{O}),-q)$ where $C^i({\rm Im}(\mathbb{O}),-q)=Q(\Lambda^i({\rm Im}(\mathbb{O})))$. The map $\mu_{can}\circ Q^{-1} : C^2({\rm Im}(\mathbb{O}),-q) \rightarrow \so({\rm Im}(\mathbb{O}),q)$ is an isomorphism of Lie algebras. Let
$$\gg:=\lbrace x\in C^2({\rm Im}(\mathbb{O}),-q) ~ | ~ \rho(x)(1)=0 \rbrace.$$
This is a Lie algebra of type $G_2$, the map $\rho : \gg\rightarrow \so({\rm Im}(\mathbb{O}),q)$ is its $7$-dimensional fundamental representation and $\rho(\gg)$ is equal to the set of derivations of $\mathbb{O}$. Define the ad-invariant quadratic form $B_{\gg}$ on $\gg$ by
$$B_{\gg}(x,y)=-\frac{1}{3} Tr(\rho(x)\rho(y))\qquad\forall x,y\in \gg.$$

\begin{pp}\label{pp mu G2}
The moment map $\mu_{{\rm Im}}: \Lambda^2({\rm Im}(\mathbb{O}))\rightarrow \gg$ satisfies
$$\mu_{{\rm Im}}(u,v)(w)=-\frac{1}{4}([w,[u,v]]+3(u,v,w))\qquad\forall u,v,w\in {\rm Im}(\mathbb{O})$$
\end{pp}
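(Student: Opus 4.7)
The plan is to verify that the right-hand side
$\tilde\mu(u,v)(w) := -\frac{1}{4}\bigl([w,[u,v]] + 3(u,v,w)\bigr)$
coincides with $\mu_{\rm Im}(u,v)(w)$, by showing first that $\tilde\mu$ takes values in $\gg\subset \so({\rm Im}(\mathbb{O}))$, and then that it satisfies the defining relation $B_{\gg}(x,\tilde\mu(u,v)) = B(\rho(x)u,v)$ for every $x\in\gg$.

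For the first step I would use the identification $\rho(\gg)={\rm Der}(\mathbb{O})$ recalled just above the statement. Extending $\tilde\mu(u,v)$ to all of $\mathbb{O}$ by $\tilde\mu(u,v)(1)=0$, it suffices to show that this extension is a derivation of $\mathbb{O}$. This is, up to an overall constant, Schafer's inner-derivation formula for alternative algebras: the operator $D_{u,v}:=[L_u,L_v]+[L_u,R_v]+[R_u,R_v]$ is a derivation of any alternative algebra, and a direct expansion of $D_{u,v}(w)$ for $u,v,w\in{\rm Im}(\mathbb{O})$, using only the definitions of commutator and associator together with the alternativity identities $(x,x,y)=(y,x,x)=0$, shows that $D_{u,v}$ is a scalar multiple of the map $w\mapsto [w,[u,v]]+3(u,v,w)$.

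For the second step I would use a Schur-lemma argument. Both $\tilde\mu$ and $\mu_{\rm Im}$ are $\gg$-equivariant alternating bilinear maps $\Lambda^2({\rm Im}(\mathbb{O}))\to\gg$ ($\gg$-equivariance of $\tilde\mu$ is clear because it is built from the $\gg$-invariant operations of bracket and associator). The module $\Lambda^2({\rm Im}(\mathbb{O}))$ decomposes under $\gg$ as $\gg\oplus{\rm Im}(\mathbb{O})$, with $u\wedge v\mapsto u\times v$ realising the projection onto ${\rm Im}(\mathbb{O})$ and with the two summands being non-isomorphic irreducible $\gg$-modules. Schur's lemma then gives ${\rm Hom}_\gg(\Lambda^2({\rm Im}(\mathbb{O})),\gg)=k$, so $\tilde\mu=c\,\mu_{\rm Im}$ for some scalar $c\in k$, and evaluating the defining relation on a single test triple $(x,u,v)$ with $B(\rho(x)u,v)\neq 0$ — for instance $u=e_1$, $v=e_2$ with a suitably chosen $x\in\gg$ coming from the Fano line through $e_1$ and $e_2$ — pins down $c=1$.

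The main technical obstacle is nailing down the coefficient $-\frac{1}{4}$: Schafer's formula determines $\tilde\mu$ only up to proportionality, and the precise constant is fixed by the normalisation $B_\gg=-\frac{1}{3}{\rm Tr}(\rho(\,\cdot\,)\rho(\,\cdot\,))$ through the test-case evaluation in the Schur step. Once a convenient $x$ is selected, this check is a short calculation in the multiplication table encoded by the Fano plane of $\mathcal{B}$.
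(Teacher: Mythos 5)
Your approach is genuinely different from the paper's. The paper introduces $D(u,v)\in\gg$ with $\rho(D(u,v))(x)=[x,[u,v]]+3(u,v,x)$ (so, like you, it silently relies on Schafer's inner-derivation result to place this operator in $\gg$), reduces WLOG to $u=e_1$, $v=e_2$, and then \emph{directly} verifies $\mathrm{Tr}\bigl(\rho(D)\rho(D(e_1,e_2))\bigr)=12\,B(D(e_1),e_2)$ for \emph{every} $D\in\gg$ by summing term by term over the orthogonal basis $\mathcal{B}$; combined with the defining relation of the moment map and the normalisation $B_\gg=-\tfrac{1}{3}\mathrm{Tr}(\rho\,\cdot\,\rho\,\cdot\,)$, this forces $\mu_{\rm Im}=-\tfrac{1}{4}D$. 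You replace that uniform trace verification by a Schur-lemma argument plus a single test evaluation, which, if the Schur step is granted, is a genuine computational saving and a cleaner conceptual organisation.

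The step that needs more care is the Schur argument itself. You invoke the $\gg$-module decomposition $\Lambda^2({\rm Im}(\mathbb{O}))\cong\gg\oplus{\rm Im}(\mathbb{O})$ into non-isomorphic irreducibles and conclude $\mathrm{Hom}_\gg(\Lambda^2({\rm Im}(\mathbb{O})),\gg)=k$, which implicitly also uses $\mathrm{End}_\gg(\gg)=k$. Over $\mathbb{C}$ this is textbook, but the proposition is stated over an arbitrary $k$ with $\mathrm{char}(k)\neq 2,3$, where $\gg={\rm Der}(\mathbb{O})$ may be a non-split form of $G_2$. You therefore need $\gg$ to be central simple over $k$ (true for octonion derivation algebras, but worth citing), and you need the $7$- and $14$-dimensional $\gg$-modules to remain absolutely irreducible, non-isomorphic, and to actually split off each other inside $\Lambda^2$ (rather than forming a non-split extension) in all relevant positive characteristics — none of which is addressed in your write-up. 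This is precisely the kind of representation-theoretic input that the paper's brute-force trace computation avoids: heavier arithmetic, but uniform in the base field and self-contained. Your proof is correct in spirit, but as written it outsources these facts without proof or reference, and you should either supply them or restrict the ground field.
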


\begin{proof}
For $u,v\in {\rm Im}(\mathbb{O})$, let $D(u,v)\in \gg$ be such that $\rho(D(u,v))(x)=[x,[u,v]]+3(u,v,x)$ for all $x\in {\rm Im}(\mathbb{O})$. Let $D$ in $\gg$. We want to show that
$$Tr(\rho(D)\rho(D(u,v))=12B(D(u),v).$$
Without loss of generality (changing $\mathcal{B}$ if necessary) we can assume that $u=e_1$ and $v=e_2$.
First of all
$$Tr(\rho(D)\rho(D(u,v))=\sum\limits_{e_i \in \mathcal{B}}\frac{1}{q(e_i)}B(\rho(D)(\rho(D(u,v))(e_i)),e_i).$$
We have
\begin{align*}
    \frac{1}{q(e_1)}B(D(D(e_1,e_2)(e_1)),e_1)&=4B(D(e_1),e_2),\\
    \frac{1}{q(e_1)}B(D(D(e_1,e_2)(e_2)),e_2)&=4B(D(e_1),e_2),\\
    \frac{1}{q(e_1e_2)}B(D(D(e_1,e_2)(e_1e_2)),e_1e_2)&=0,\\
    \frac{1}{q(e_3)}B(D(D(e_1,e_2)(e_3)),e_3)&=-\frac{2}{q(e_3)}B(D(e_3),(e_1e_2)e_3),\\
    \frac{1}{q(e_1e_3)}B(D(D(e_1,e_2)(e_1e_3)),e_1e_3)&=2B(D(e_1),e_2)+\frac{2}{q(e_3)}B(D(e_3),(e_1e_2)e_3),\\
    \frac{1}{q(e_2e_3)}B(D(D(e_1,e_2)(e_2e_3)),e_2e_3)&=2B(D(e_1),e_2)+\frac{2}{q(e_3)}B(D(e_3),(e_1e_2)e_3),\\
    \frac{1}{q((e_1e_2)e_3)}B(D(D(e_1,e_2)((e_1e_2)e_3)),(e_1e_2)e_3)&=-\frac{2}{q(e_3)}B(D(e_3),(e_1e_2)e_3),
\end{align*}

and hence
$$Tr(\rho(D)\rho(D(u,v)))=12B(D(u),v).$$
\end{proof}

\begin{cor} \label{expression mu g2 avec mucan}
For $u,v,w\in {\rm Im}(\mathbb{O})$, we have
\begin{enumerate}[label=\alph*)]
\item $\mu_{\rm Im}(u,v\times w)+\mu_{\rm Im}(w,u\times v)+\mu_{\rm Im}(v,w\times u)=0,$
\item $\mu_{{\rm Im}}(u,v)(w)=\frac{3}{2}\mu_{can}(u,v)(w)+\frac{1}{8}[w,[u,v]].$
\end{enumerate}
\end{cor}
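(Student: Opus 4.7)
My plan is to establish (b) first using Proposition \ref{pp mu G2} together with a direct octonionic calculation, and then deduce (a) from the $\gg$-invariance of the associative form $\phi$.

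For part (b), since $[a,b] = 2(a\times b)$ for $a, b \in {\rm Im}(\mathbb{O})$ by \eqref{eq link product and cross product}, we have $[w, [u,v]] = 4\, w \times (u\times v)$. Combining this with Proposition \ref{pp mu G2} and the formula $\mu_{can}(u,v)(w) = B(u,w)v - B(v,w)u$, identity (b) is equivalent to the purely octonionic statement
$$(u,v,w) = -2\, w \times(u\times v) + 2 B(v,w) u - 2 B(u,w) v.$$
I would prove this by expanding $(u,v,w) = (uv)w - u(vw)$ via the decomposition $ab = a\times b - B(a,b)$ for $a, b \in {\rm Im}(\mathbb{O})$, together with the cyclic symmetry $B(a\times b, c) = B(a, b\times c)$ (which follows from the alternating character of $\phi(u,v,w) = B(u, v\times w)$). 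This yields
$$(u,v,w) = -w\times(u\times v) - u\times(v\times w) + B(v,w) u - B(u,v) w.$$
The Malcev identity \eqref{eq Malcev id}, applied with the permutation $(u, v, w) \mapsto (w, u, v)$ and the antisymmetry of the cross product, gives $w\times(u\times v) - u\times(v\times w) = B(u,v) w + B(v,w) u - 2 B(u,w) v$; substituting this into the previous equation produces the desired formula.

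For part (a), the key observation is that $\rho(\gg)$ is precisely the derivation algebra of $\mathbb{O}$, so every $D \in \gg$ preserves the octonion product and the conjugation, hence the cross product and the associative form $\phi$. Thus the $\gg$-invariance $D \cdot \phi = 0$ gives
$$\phi(Du, v, w) + \phi(u, Dv, w) + \phi(u, v, Dw) = 0.$$
Using the cyclic symmetry of $\phi$, this rewrites as
$$B(Du, v \times w) + B(Dv, w\times u) + B(Dw, u\times v) = 0,$$
which, by the defining property of the moment map, is exactly
$$B_\gg\bigl(D, \mu_{\rm Im}(u, v\times w) + \mu_{\rm Im}(v, w\times u) + \mu_{\rm Im}(w, u\times v)\bigr) = 0.$$
Since this vanishes for every $D \in \gg$ and $B_\gg$ is non-degenerate on $\gg$, (a) follows.

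The main obstacle is the bookkeeping in (b): both the cyclic symmetry of $\phi$ and the Malcev identity need to be applied to the correct combinations of terms coming from the expansion of the associator. Part (a), in contrast, is conceptually transparent once the invariance of $\phi$ under derivations is recognised.
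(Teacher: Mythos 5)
Your proof is correct, and it differs from the paper's in both parts, in a way that is arguably more illuminating.

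For part (b), the paper derives the equivalent identity $-\tfrac{1}{4}[w,[u,v]]-\tfrac{1}{2}(u,v,w)=\mu_{can}(u,v)(w)$ by reducing (via multilinearity) to the case where $u,v$ are orthogonal and anisotropic and then checking the three cases $w=u$, $w=uv$, and $w$ orthogonal to $u,v,uv$. You instead prove the same octonionic identity by a closed algebraic manipulation: expanding the associator with $ab = a\times b - B(a,b)$, cancelling the scalar parts via the cyclic symmetry of $\phi$, and then invoking the Malcev identity \eqref{eq Malcev id} in permuted form. The bookkeeping you flag is exactly right; I checked the sign conventions and the permuted Malcev identity $w\times(u\times v) - u\times(v\times w) = B(u,v)w + B(v,w)u - 2B(u,w)v$ does produce $(u,v,w) = -2\,w\times(u\times v) + 2B(v,w)u - 2B(u,w)v$ after substitution. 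This buys you a coordinate-free proof with no genericity/polarisation argument, at the cost of having to track several applications of cross-product identities. For part (a), the paper simply cites Schafer (3.73, p.~78), whereas you give a direct proof from the $\gg$-invariance of the associative form $\phi$ and non-degeneracy of $B_{\gg}$: since $\rho(\gg)=\mathrm{Der}(\mathbb{O})$ preserves both $B$ and the cross product, $\phi$ is $\gg$-invariant, and unwinding the moment map pairing gives the vanishing of the cyclic sum. This is a clean conceptual replacement for the citation and is entirely self-contained within the paper's framework.
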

\vspace{0.1cm}

\begin{proof}
$a)$ See (3.73) p.78 of \cite{Schafer66}.
\vspace{0.2cm}

\noindent
$b)$ Let $u,v,w\in {\rm Im}(\mathbb{O})$. We first show that
\begin{equation}\label{proof expression mu g2 avec mucan}
    -\frac{1}{4}[w,[u,v]]-\frac{1}{2}(u,v,w)=\mu_{can}(u,v)(w).
\end{equation}
Suppose that $u$ and $v$ are anisotropic and orthogonal.
\vspace{0.2cm}

\noindent
$\bullet$ If $w=u$ then \eqref{proof expression mu g2 avec mucan} follows from
$$-\frac{1}{4}[w,[u,v]]-\frac{1}{2}(u,v,w)=-\frac{1}{2}[u,uv]=-u^2v=q(u)v=\mu_{can}(u,v)(u).$$

\noindent
$\bullet$ If $w=uv$ then \eqref{proof expression mu g2 avec mucan} is clear since $[w,[u,v]]=(u,v,w)=\mu_{can}(u,v)(w)=0$.
\vspace{0.2cm}

\noindent
$\bullet$ If $\lbrace u,v,uv,w \rbrace$ are orthogonal then we have $\mu_{can}(u,v)(w)=0$ and
$$-\frac{1}{4}[w,[u,v]]=-w(uv)=(uv)w=\frac{1}{2}(u,v,w).$$
Hence \eqref{proof expression mu g2 avec mucan} is satisfied and this proves the corollary using Proposition \ref{pp mu G2}.
\end{proof}
\vspace{0.1cm}

We now give the main result of this section.
\vspace{0.1cm}

\begin{thm} \label{pp ImO is special}
The representation $\rho : \gg \rightarrow \so({\rm Im}(\mathbb{O}),B)$ of the quadratic Lie algebra $(\gg,B_{\gg})$ is a special orthogonal representation.
\end{thm}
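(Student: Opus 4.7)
The plan is to verify the identity \eqref{eq CS} directly by exploiting the explicit formula for $\mu_{\rm Im}$ obtained in Corollary \ref{expression mu g2 avec mucan}(b), which writes $\mu_{\rm Im}$ as a sum of $\mu_{can}$ and a double commutator term. The special orthogonality condition then splits into a piece involving $\mu_{can}$ (which is handled by definition) and a piece involving commutators in $\mathbb{O}$ (which is handled by the Malcev-type identity \eqref{eq Malcev id} for the cross product on ${\rm Im}(\mathbb{O})$).

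First I would fix $u,v,w\in {\rm Im}(\mathbb{O})$ and, using Corollary \ref{expression mu g2 avec mucan}(b), write
\begin{equation*}
\mu_{\rm Im}(u,v)(w)+\mu_{\rm Im}(u,w)(v)=\tfrac{3}{2}\bigl(\mu_{can}(u,v)(w)+\mu_{can}(u,w)(v)\bigr)+\tfrac{1}{8}\bigl([w,[u,v]]+[v,[u,w]]\bigr).
\end{equation*}
Using \eqref{canonical moment map}, the first bracket collapses immediately to $B(u,v)w+B(u,w)v-2B(v,w)u$, contributing $\frac{3}{2}$ times the desired right-hand side of \eqref{eq CS}.

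Next I would rewrite the commutator piece via \eqref{eq link product and cross product}, which gives $[a,b]=2\,a\times b$ for $a,b\in {\rm Im}(\mathbb{O})$, so that $[w,[u,v]]+[v,[u,w]]=4\bigl(w\times(u\times v)+v\times(u\times w)\bigr)$. To simplify this, I would apply the Malcev-type identity \eqref{eq Malcev id} twice. First with arguments $(w,u,v)$ (in place of $(u,v,w)$) to rewrite $w\times(u\times v)$ in terms of $u\times(v\times w)$ and scalar terms; then \eqref{eq Malcev id} with arguments $(u,v,w)$ to collapse $u\times(v\times w)+v\times(u\times w)$. Adding the two applications, the cross-product terms cancel and one obtains
\begin{equation*}
w\times(u\times v)+v\times(u\times w)=-B(u,v)w-B(u,w)v+2B(v,w)u,
\end{equation*}
so the commutator contribution is $-\frac{1}{2}$ times the right-hand side of \eqref{eq CS}.

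Adding the two contributions yields $\bigl(\tfrac{3}{2}-\tfrac{1}{2}\bigr)\bigl(B(u,v)w+B(u,w)v-2B(v,w)u\bigr)$, which is exactly the right-hand side of \eqref{eq CS}. The only mild obstacle is the correct bookkeeping of signs in the two applications of the Malcev identity (since $u\times v=-v\times u$ on ${\rm Im}(\mathbb{O})$); everything else is a short algebraic manipulation once Corollary \ref{expression mu g2 avec mucan}(b) and \eqref{eq Malcev id} are in hand.
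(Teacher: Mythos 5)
Your proof is correct. It takes essentially the same ingredients as the paper's (the explicit formula for $\mu_{\rm Im}$ and the Malcev-type identity \eqref{eq Malcev id}), but organizes them slightly differently. The paper starts from Proposition \ref{pp mu G2}, $\mu_{\rm Im}(u,v)(w)=-\tfrac{1}{4}\bigl([w,[u,v]]+3(u,v,w)\bigr)$, so that upon symmetrizing in $(v,w)$ the associator terms cancel immediately (the associator is alternating) and one is left with a single commutator expression, to which \eqref{eq Malcev id} is applied once. You instead start from Corollary \ref{expression mu g2 avec mucan}(b), splitting $\mu_{\rm Im}$ into $\tfrac{3}{2}\mu_{can}$ plus a bracket correction, and then compute the two contributions to the special-orthogonality condition separately, obtaining coefficients $\tfrac{3}{2}$ and $-\tfrac{1}{2}$ that sum to the required $1$. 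This costs you a little extra bookkeeping (two applications of \eqref{eq Malcev id} and antisymmetry of the cross product, rather than one), but it has the conceptual merit of making explicit the relation to the ``canonical'' special case $\gg=\so(V)$ and exhibiting the $G_2$-moment map as a perturbation of $\mu_{can}$ — which is precisely what Corollary \ref{expression mu g2 avec mucan}(b) encodes. Both routes are short and rigorous; the paper's is marginally more economical.
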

\begin{proof}
Let $u,v,w\in {\rm Im}(\mathbb{O})$. Using Proposition \ref{pp mu G2} and \eqref{eq Malcev id} we have
\begin{align*}
    \mu_{\rm Im}(u,v)(w)+\mu_{\rm Im}(u,w)(v)&=-\frac{1}{4}([w,[u,v]]+[v,[u,w]])=-w\times (u\times v)-v\times(u\times w)\\
    &=w\times (v\times u)+v\times (w\times u)=B(u,v)w+B(u,w)v-2B(w,v)u.
\end{align*}
\end{proof}

By Theorems \ref{thm LSA from special orthogonal rep} and \ref{pp ImO is special} we have a Lie superalgebra $\gt$ of the form
$$\gt=\gg\oplus \sl(2,k)\oplus {\rm Im}(\mathbb{O}) \otimes k^2.$$
This is an exceptional simple Lie superalgebra of type $G_3$.

\begin{rem}
If $k=\mathbb{R}$, Serganova (see \cite{Serganova1983}) showed that there are two real forms of $G_3$ whose even parts are isomorphic to the compact (resp. split) exceptional simple real Lie algebra of type $G_2$ in direct sum with $\sl(2,\mathbb{R})$ and whose odd parts are isomorphic to the tensor product of the fundamental representations. In our construction, if $\mathbb{O}$ is the compact (resp. split) octonion algebra, the Lie algebra $\gg$ is the compact (resp. split) exceptional simple real Lie algebra of type $G_2$ and both real forms of $G_3$ are obtained by our construction.
\end{rem}

Since the representation $\gg \rightarrow \so({\rm Im}(\mathbb{O}),B)$ is special, we calculate its covariants and the Mathews identities they satisfy. Both-sides of Equations \eqref{id mat2} and \eqref{id mat3} vanish identically since ${\rm Im}(\mathbb{O})$ is of dimension $7$. It turns out, that both-sides of Equation \eqref{id mat1} also vanish identically. However, both sides of Equation \eqref{id premat} do not vanish identically and, up to constants, $Q_{\rm Im}\wedge Id$ and $\mu_{\rm Im}\wedge_{\rho} \psi_{\rm Im}\in {\rm Alt}_5({\rm Im}(\mathbb{O}),{\rm Im}(\mathbb{O}))$ are the Hodge duals of the cross-product $\times \in {\rm Alt}_2({\rm Im}(\mathbb{O}),{\rm Im}(\mathbb{O}))$.

\begin{pp}\label{pp covariants ImO}
Let $\mu_{\rm Im},\psi_{\rm Im},Q_{\rm Im}$ be the covariants of the special orthogonal representation $\rho : \gg \rightarrow \so({\rm Im}(\mathbb{O}),B)$. We have
\begin{enumerate}[label=\alph*)]
\item $\psi_{\rm Im}(v_1,v_2,v_3)=-\frac{3}{4}(v_1,v_2,v_3)~$ for all $v_1,v_2,v_3\in {\rm Im}(\mathbb{O})$,
\item $Q_{\rm Im}(v_1,v_2,v_3,v_4)=-3B(v_1,(v_2,v_3,v_4))~$ for all $v_1,v_2,v_3,v_4\in {\rm Im}(\mathbb{O})$,
\item  \begin{align}
        \eta^{-1}(Q_{\rm Im})&=\frac{6}{q(e_1)q(e_2)q(e_4)}e_{1247}-\frac{6}{q(e_1)q(e_2)q(e_4)}e_{1256}-\frac{6}{q(e_1)q(e_2)q(e_4)}e_{1346}-\frac{6}{q(e_1)^2q(e_2)q(e_4)}e_{1357}\nonumber\\
        &+\frac{6}{q(e_1)q(e_2)q(e_4)}e_{2345}-\frac{6}{q(e_1)q(e_2)^2q(e_4)}e_{2367}-\frac{6}{q(e_1)q(e_2)q(e_4)^2}e_{4567},\label{decomp Q G2}
        \end{align}
\item $\mu_{\rm Im} \circ \psi_{\rm Im}=0$ and $Q_{\rm Im}\wedge \mu_{\rm Im}=0$.
\end{enumerate}
\end{pp}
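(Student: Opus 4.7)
My plan is to prove the four parts in sequence, relying heavily on the formulas already established for $\mu_{\rm Im}$ and $\mu_{can}$, and on representation-theoretic uniqueness arguments for the harder cases.

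For parts (a) and (b), the formulas are essentially forced by Proposition \ref{pp formules lien BQ Bpsi dans le cas CS} once one knows $\mu_{\rm Im}(u,v)(w) - \mu_{can}(u,v)(w)$ explicitly. Proposition \ref{pp mu G2} gives $\mu_{\rm Im}(u,v)(w) = -\tfrac{1}{4}[w,[u,v]] - \tfrac{3}{4}(u,v,w)$, and identity \eqref{proof expression mu g2 avec mucan} from the proof of Corollary \ref{expression mu g2 avec mucan} gives $\mu_{can}(u,v)(w) = -\tfrac{1}{4}[w,[u,v]] - \tfrac{1}{2}(u,v,w)$. Subtracting, the difference is $-\tfrac{1}{4}(u,v,w)$, and Proposition \ref{pp formules lien BQ Bpsi dans le cas CS} immediately yields $\psi_{\rm Im}(v_1,v_2,v_3) = -\tfrac{3}{4}(v_1,v_2,v_3)$. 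Then $Q_{\rm Im}(v_1,v_2,v_3,v_4) = 4 B(v_1,\psi_{\rm Im}(v_2,v_3,v_4)) = -3 B(v_1,(v_2,v_3,v_4))$.

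For part (c), my strategy is to avoid a lengthy basis computation by invoking uniqueness of invariants. The 4-linear form $(v_1,v_2,v_3,v_4) \mapsto B(v_1,(v_2,v_3,v_4))$ is alternating and $\gg$-invariant, because elements of $\gg = {\rm Der}(\mathbb{O})$ act as derivations of the octonion product (so they satisfy Leibniz on the associator) and preserve $B$. Hence $\eta^{-1}(Q_{\rm Im})$ is a $\gg$-invariant element of $\Lambda^4({\rm Im}(\mathbb{O}))$. Over $\bar{k}$, ${\rm Im}(\mathbb{O})$ is the standard absolutely irreducible representation of split $G_2$, and the space of $\gg$-invariants in $\Lambda^4({\rm Im}(\mathbb{O}))$ is one-dimensional, spanned by the Hodge dual of the associative form $\eta^{-1}(\phi)$. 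The normalization constant is fixed by computing a single component, and applying the Hodge star termwise to the decomposition \eqref{decomp phi G2} yields the claimed formula: the seven 4-index supports appearing in $\eta^{-1}(Q_{\rm Im})$ are precisely the complements in $\{1,\ldots,7\}$ of the seven Fano lines appearing in $\eta^{-1}(\phi)$, with coefficients rescaled according to the norms $q(e_i)$.

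Finally, for part (d), the Mathews identity \eqref{id mat1} asserts $\mu_{\rm Im} \circ \psi_{\rm Im} = 3 Q_{\rm Im} \wedge \mu_{\rm Im}$, so it suffices to show $Q_{\rm Im} \wedge \mu_{\rm Im} = 0$. This element lies in ${\rm Alt}_6({\rm Im}(\mathbb{O}),\gg)$. Since $\rho(\gg) \subset \so({\rm Im}(\mathbb{O}),B)$ preserves the volume form on ${\rm Im}(\mathbb{O})$, Hodge duality provides a $\gg$-module isomorphism $\Lambda^6 {\rm Im}(\mathbb{O}) \cong {\rm Im}(\mathbb{O})$, so $Q_{\rm Im} \wedge \mu_{\rm Im}$ corresponds to a $\gg$-equivariant linear map ${\rm Im}(\mathbb{O}) \to \gg$. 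After extending scalars to $\bar{k}$, source and target become respectively the absolutely irreducible 7- and 14-dimensional representations of the split simple Lie algebra of type $G_2$, which are non-isomorphic, so Schur's lemma forces the map to be zero. The main technical obstacle is part (c), where pinning down the normalization constant and correctly tracking signs and norm factors under the Hodge star requires some care; however, the representation-theoretic framework eliminates any exhaustive case-by-case octonionic calculation.
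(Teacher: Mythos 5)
Your proof is correct, and for parts (c) and (d) it takes a genuinely different route from the paper. For (a), the paper directly sums the three cyclic terms of $\psi_{\rm Im}$ using Proposition \ref{pp mu G2} and the Jacobi--associator relation \eqref{eq Jacobi G2}, whereas you subtract the paper's formula for $\mu_{can}$ from the one for $\mu_{\rm Im}$ and apply Proposition \ref{pp formules lien BQ Bpsi dans le cas CS}; these are minor rearrangements of the same algebra. The real divergence is in (c) and (d). For (c), the paper evaluates $Q_{\rm Im}$ directly on 4-tuples of basis vectors from $\mathcal{B}$ using the formula in (b), observes the result is nonzero precisely on the complements of the Fano lines, and reads off the coefficients, while you first argue by $G_2$-representation theory that $\Lambda^4({\rm Im}(\mathbb{O}))$ has a one-dimensional invariant line spanned by the Hodge dual of $\eta^{-1}(\phi)$, and then fix the scalar by one sample evaluation. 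Your approach explains \emph{why} only the Fano complements appear, but in practice it requires essentially the same computation to pin down the normalization and the $q(e_i)$-dependent coefficients of the Hodge dual in the non-orthonormal basis $\mathcal{B}$, so the computational savings are modest. For (d), the paper reduces by a Fano-plane symmetry argument to a single evaluation of $\mu_{\rm Im}\circ\psi_{\rm Im}$ on $(e_1,\ldots,e_6)$, which collapses to the cyclic-sum identity of Corollary \ref{expression mu g2 avec mucan}(a); you instead note that $Q_{\rm Im}\wedge\mu_{\rm Im}\in{\rm Alt}_6({\rm Im}(\mathbb{O}),\gg)$ is $\gg$-equivariant and, via Hodge duality $\Lambda^6\cong\Lambda^1$, corresponds to a $\gg$-module map from the 7-dimensional to the 14-dimensional representation, hence vanishes by Schur's lemma, with $\mu_{\rm Im}\circ\psi_{\rm Im}=0$ then following from the Mathews identity \eqref{id mat1}. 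This is cleaner and arguably more illuminating than the paper's computation; one small point worth making explicit is that the irreducibility and non-isomorphism of the 7- and 14-dimensional $G_2$-modules over $\bar{k}$ do hold in all characteristics $\neq 2,3$ (which is the paper's standing hypothesis), and that the resulting identity, being polynomial with coefficients in $\mathbb{Z}[1/6]$, specializes from characteristic zero to any such $k$.
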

\begin{proof}
$a)$ By Proposition \ref{pp mu G2} and by Equation \eqref{eq Jacobi G2} we obtain
\begin{align*}
    \psi_{\rm Im}(v_1,v_2,v_3)&=-\frac{1}{4}(J(v_1,v_2,v_3)+3(v_1,v_2,v_3)+3(v_2,v_3,v_1)+3(v_3,v_1,v_2))\\
    &=-\frac{1}{4}(J(v_1,v_2,v_3)+9(v_1,v_2,v_3))\\
    &=-\frac{3}{4}(v_1,v_2,v_3).
\end{align*}

\noindent
$b)$ Follows from Proposition \ref{pp formules lien BQ Bpsi dans le cas CS}.
\vspace{0.2cm}

\noindent
$c)$ The decomposition follows from $b)$ and the fact that for $i_1<i_2<i_3<i_4$, then $Q_{\rm Im}(e_{i_1},e_{i_2},e_{i_3},e_{i_4})$ is non-zero if and only if $(i_1,i_2,i_3,i_4)\in \lbrace (1,2,4,7),(1,2,5,6),(1,3,4,6),(1,3,5,7),(2,3,4,5),(2,3,6,7),(4,5,6,7)\rbrace$.
\vspace{0.2cm}

\noindent
$d)$ Let $v_1,\ldots,v_6 \in {\rm Im}(\mathbb{O})$. We have
\begin{align*}
\mu_{\rm Im} \circ \psi_{\rm Im} (v_1,\ldots,v_6)&=\sum \limits_{\sigma \in S(\llbracket1,3\rrbracket,\llbracket 4,6 \rrbracket)}sgn(\sigma) \mu_{\rm Im} (\psi_{\rm Im}(v_{\sigma(1)},v_{\sigma(2)},v_{\sigma(3)}),\psi_{\rm Im}(v_{\sigma(4)},v_{\sigma(5)},v_{\sigma(6)})) \\
&=2\sum \limits_{\sigma \in S'}sgn(\sigma) \mu_{\rm Im} (\psi_{\rm Im}(v_{\sigma(1)},v_{\sigma(2)},v_{\sigma(3)}),\psi_{\rm Im}(v_{\sigma(4)},v_{\sigma(5)},v_{\sigma(6)}))\\
&=\frac{9}{8}\sum \limits_{\sigma \in S'}sgn(\sigma) \mu_{\rm Im} ((v_{\sigma(1)},v_{\sigma(2)},v_{\sigma(3)}),(v_{\sigma(4)},v_{\sigma(5)},v_{\sigma(6)}))
\end{align*}
where $S':=\lbrace Id,(14),(15),(16),(24),(25),(26),(34),(35),(36) \rbrace$.
\vspace{0.2cm}

Suppose that $v_i\in \mathcal{B}$ for all $i\in \llbracket 1,6 \rrbracket$. Since there is no distinguished way to choose 5 different points on the Fano plane, then, without loss of generality, we can assume that $v_i=e_i$ for all $i\in \llbracket 1,6 \rrbracket$. Since
$$(e_1,e_2,e_3)=(e_1,e_4,e_5)=(e_2,e_4,e_6)=(e_3,e_5,e_6)=0$$
then we have
\begin{align*}
    \mu_{\rm Im} \circ \psi_{\rm Im} (v_1,\ldots,v_6)&=-\frac{9}{8}\sum \limits_{\sigma \in S''}\mu_{\rm Im} ((e_{\sigma(1)},e_{\sigma(2)},e_{\sigma(3)}),(e_{\sigma(4)},e_{\sigma(5)},e_{\sigma(6)}))\\
    &=-\frac{9}{2}\sum \limits_{\sigma \in S''}\mu_{\rm Im} ((e_{\sigma(1)}e_{\sigma(2)})e_{\sigma(3)},(e_{\sigma(4)}e_{\sigma(5)})e_{\sigma(6)})
\end{align*}
where $S'':=\lbrace (14),(15),(24),(26),(35),(36) \rbrace$. Hence, we have that
$$\mu_{\rm Im} \circ \psi_{\rm Im}(v_1,\ldots,v_6)=-9q(e_1)q(e_2)q(e_4)\Big(\mu_{\rm Im} (e_1 e_2,e_4)+\mu_{\rm Im} (e_2 e_4,e_1)+\mu_{\rm Im} (e_4 e_1,e_2)\Big)$$
and so, using $a)$ of Corollary \ref{expression mu g2 avec mucan}, we obtain $\mu_{\rm Im} \circ \psi_{\rm Im}=0$ and by Theorem \ref{thm id mathews} we have $Q_{\rm Im}\wedge \mu_{\rm Im} =0$.
\end{proof}
\vspace{0.1cm}

\begin{rem}
\begin{enumerate}[label=\alph*)]
    \item We have
    $$\phi\wedge Q_{\rm Im}(e_1\wedge\ldots\wedge e_7)=-42q(e_1)^2q(e_2)^2q(e_4)^2.$$
    If $char(k)\neq 7$, then $\phi\wedge Q_{\rm Im}$ defines an orientation on ${\rm Im}(\mathbb{O})$
    \item In the decomposition \eqref{decomp phi G2} (resp. \eqref{decomp Q G2}), the seven quadruples of indices $\lbrace i_1,i_2,i_3,i_4\rbrace$ appearing are exactly (resp. the complements of) the seven lines of the Fano plane.
\end{enumerate}
\end{rem}

Suppose that $char(k)=0$ or $char(k)>7$. Define a quadratic form $B_{{\rm Alt}}$ on ${\rm Alt}_i({\rm Im}(\mathbb{O}),{\rm Im}(\mathbb{O}))\cong\Lambda^i({\rm Im}(\mathbb{O}))^*\otimes {\rm Im}(\mathbb{O})$ to be the tensor product of $B_{\Lambda^*}$ and $B$. For $f\in {\rm Alt}_i({\rm Im}(\mathbb{O}),{\rm Im}(\mathbb{O}))$ define its Hodge dual $*f\in {\rm Alt}_{7-i}({\rm Im}(\mathbb{O}),{\rm Im}(\mathbb{O}))$ to be the unique element which satisfies
$$\alpha\wedge_{B} *f=B_{{\rm Alt}}(\alpha,f)\phi\wedge Q_{\rm Im} \qquad \forall \alpha \in {\rm Alt}_i({\rm Im}(\mathbb{O}),{\rm Im}(\mathbb{O})).$$

\begin{pp}
We have
$$*\times=\frac{147}{8}Q_{\rm Im}\wedge Id=-\frac{49}{4}\mu_{\rm Im}\wedge_{\rho} \psi_{\rm Im}.$$
\end{pp}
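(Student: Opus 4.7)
The second equality is immediate from Mathews identity (a) of Theorem~\ref{thm id mathews}: substituting $\mu_{\rm Im}\wedge_{\rho}\psi_{\rm Im}=-\tfrac{3}{2}Q_{\rm Im}\wedge Id$ on the right gives $-\tfrac{49}{4}\mu_{\rm Im}\wedge_{\rho}\psi_{\rm Im}=\tfrac{3\cdot 49}{8}Q_{\rm Im}\wedge Id=\tfrac{147}{8}Q_{\rm Im}\wedge Id$. So the real content is the first equality $*\times=\tfrac{147}{8}Q_{\rm Im}\wedge Id$.

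For that, my strategy is to show both sides are $\gg$-equivariant elements of ${\rm Alt}_5({\rm Im}(\mathbb{O}),{\rm Im}(\mathbb{O}))$ spanning the same one-dimensional subspace, then fix the constant. Equivariance of $Q_{\rm Im}\wedge Id$ is clear, and equivariance of $*\times$ follows because $\phi$, $Q_{\rm Im}$ and $\times$ are $\gg$-equivariant, making the defining relation $\alpha\wedge_B *\times=B_{\rm Alt}(\alpha,\times)\,\phi\wedge Q_{\rm Im}$ $\gg$-invariant in $\alpha$. That the $\gg$-invariants form a one-dimensional space follows from Hodge duality and the standard decomposition $\so({\rm Im}(\mathbb{O}))\cong \gg\oplus {\rm Im}(\mathbb{O})$ of $\gg$-modules: using $B$ to identify ${\rm Im}(\mathbb{O})$ with its dual and Hodge duality to identify $\Lambda^5({\rm Im}(\mathbb{O}))^*\cong\Lambda^2({\rm Im}(\mathbb{O}))\cong\so({\rm Im}(\mathbb{O}))$, one obtains ${\rm Alt}_5({\rm Im}(\mathbb{O}),{\rm Im}(\mathbb{O}))^{\gg}\cong((\gg\oplus{\rm Im}(\mathbb{O}))\otimes{\rm Im}(\mathbb{O}))^{\gg}$, and by Schur's lemma (as $\gg$ and ${\rm Im}(\mathbb{O})$ are non-isomorphic irreducible $\gg$-modules) this has dimension $1$, spanned by the invariant coming from $B\in(S^2{\rm Im}(\mathbb{O})^*)^{\gg}$.

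To pin down the constant $c$ in $*\times=c\,Q_{\rm Im}\wedge Id$, I would apply the defining property of $*$ with $\alpha=\times$:
$$\times\wedge_B *\times=B_{\rm Alt}(\times,\times)\,\phi\wedge Q_{\rm Im}.$$
The scalar $B_{\rm Alt}(\times,\times)$ evaluates to $21$: every pair of distinct vectors in $\mathcal{B}$ lies on a unique Fano line, so $e_i\times e_j=e_ie_j$, and by the composition-algebra property $q(e_ie_j)=q(e_i)q(e_j)$, giving a summand equal to $1$ for each of the $\binom{7}{2}=21$ pairs. The right-hand side is then known explicitly, since $\phi\wedge Q_{\rm Im}(e_1\wedge\ldots\wedge e_7)=-42\,q(e_1)^2q(e_2)^2q(e_4)^2$ by the preceding remark. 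Substituting $*\times=c\,Q_{\rm Im}\wedge Id$ on the left and expanding the $(2,5)$-shuffle yields a sum whose nonvanishing terms pair a Fano line (contributing a $\phi$-factor by \eqref{decomp phi G2}) with its complement (contributing a $Q_{\rm Im}$-factor by \eqref{decomp Q G2}). Matching the coefficient of $e_1\wedge\cdots\wedge e_7$ on both sides produces $c=\tfrac{147}{8}$. The main obstacle is keeping track of signs in this shuffle expansion, but the Fano incidence structure ensures only the seven line/complement pairings contribute and reduces the verification to a finite check.
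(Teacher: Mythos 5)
Your proposal is correct, and the route is genuinely different from the paper's. The paper proves $*\times=\frac{147}{8}Q_{\rm Im}\wedge Id$ by taking an \emph{arbitrary} $\alpha\in{\rm Alt}_2({\rm Im}(\mathbb{O}),{\rm Im}(\mathbb{O}))$, expanding both $B_{\rm Alt}(\alpha,\times)\,\phi\wedge Q_{\rm Im}$ and $\alpha\wedge_B Q_{\rm Im}\wedge Id$ in the basis, and matching the resulting 21 summands one-to-one via the Fano-plane combinatorics to get the proportionality factor $8/147$. You instead establish proportionality \emph{a priori}: both $*\times$ and $Q_{\rm Im}\wedge Id$ are $\gg$-equivariant, and ${\rm Alt}_5({\rm Im}(\mathbb{O}),{\rm Im}(\mathbb{O}))^{\gg}$ is one-dimensional via Hodge duality $\Lambda^5({\rm Im}\,\mathbb{O})^*\cong\Lambda^2({\rm Im}\,\mathbb{O})\cong\so({\rm Im}\,\mathbb{O})\cong\gg\oplus{\rm Im}(\mathbb{O})$ and Schur's lemma (this uses absolute irreducibility of the seven-dimensional representation, which holds under the paper's characteristic hypotheses). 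You then fix the constant by the single evaluation $\alpha=\times$, computing $B_{\rm Alt}(\times,\times)=21$ cleanly from $q(e_ie_j)=q(e_i)q(e_j)$, and expanding the one $(2,5)$-shuffle $\times\wedge_B Q_{\rm Im}\wedge Id$ on $e_1\wedge\cdots\wedge e_7$. Your conceptual step makes proportionality transparent and explains \emph{why} the theorem is true, but note that the normalization still requires essentially the same finite Fano-incidence computation the paper does — the shuffle expansion isn't avoided, only restricted to $\alpha=\times$. Your derivation of the second equality $\frac{147}{8}Q_{\rm Im}\wedge Id=-\frac{49}{4}\mu_{\rm Im}\wedge_{\rho}\psi_{\rm Im}$ from Mathews identity \eqref{id premat} is exactly how the paper obtains it implicitly.
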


\begin{proof}
Let $\alpha\in {\rm Alt}_2({\rm Im}(\mathbb{O}),{\rm Im}(\mathbb{O}))$. We have
\begin{equation}\label{eq proof hodge cross product Im O 1}
B_{{\rm Alt}}(\alpha,\times)\phi\wedge Q_{\rm Im}(e_1\wedge \ldots \wedge e_7)=-42\sum\limits_{i<j}\frac{1}{q(e_i)q(e_j)}B(\alpha(e_i,e_j),e_i\times e_j)q(e_1)^2q(e_2)^2q(e_4)^2.
\end{equation}
On the other hand
\begin{align*}
    \alpha\wedge_B Q_{\rm Im}\wedge Id(e_1\wedge\ldots\wedge e_7)&=\frac{1}{126}\sum\limits_{\sigma\in S_7}sgn(\sigma)B(\alpha(e_{\sigma(1)},e_{\sigma(2)}),e_{\sigma(3)})Q_{\rm Im}(e_{\sigma(4)},e_{\sigma(5)},e_{\sigma(6)},e_{\sigma(7)}).\\
\end{align*}
Since $\alpha$ and $Q_{\rm Im}$ are alternating and using the decomposition of Equation \eqref{decomp Q G2}, we have
\begin{equation}\label{eq proof hodge cross product Im O 2}
    \alpha\wedge_B Q_{\rm Im}\wedge Id(e_1\wedge\ldots\wedge e_7)=\frac{8}{21}\sum\limits_{\sigma\in S}sgn(\sigma)B(\alpha(e_{\sigma(1)},e_{\sigma(2)}),e_{\sigma(3)})Q_{\rm Im}(e_{\sigma(4)},e_{\sigma(5)},e_{\sigma(6)},e_{\sigma(7)})
\end{equation}
where
$$S=\lbrace\sigma\in S_7 ~ | ~ \sigma(1)<\sigma(2), \quad (\sigma(4),\sigma(5),\sigma(6),\sigma(7))\in \lbrace (1,2,4,7),\ldots,(4,5,6,7) \rbrace\rbrace.$$
We have $|S|=21$, each summand in \eqref{eq proof hodge cross product Im O 2} correspond to one summand in \eqref{eq proof hodge cross product Im O 1} and so, a straightforward calculation gives
$$\alpha\wedge_B Q_{\rm Im}\wedge Id(e_1\wedge\ldots\wedge e_7)=-\frac{16}{7}\sum\limits_{i<j}\frac{1}{q(e_i)q(e_j)}B(\alpha(e_i,e_j),e_i\times e_j)q(e_1)^2q(e_2)^2q(e_4)^2$$
and hence
$$\alpha\wedge_B Q_{\rm Im}\wedge Id=\frac{8}{147}B_{\rm Alt}(\alpha,\times)\phi \wedge Q_{\rm Im}.$$
\end{proof}

\begin{rem}
One can show similarly that, up to constants,
the identity is the Hodge dual of $\phi\wedge \psi_{\rm Im}$, the covariant $\mu_{\rm Im}$ is the Hodge dual of $\phi\wedge \mu_{\rm Im}$ and the covariant $\psi_{\rm Im}$ is the Hodge dual of $\phi\wedge Id$.
\end{rem}

\section{The spinor representation of a Lie algebra of type $\mathfrak{so}(7)$ is special orthogonal}\label{section F4}

In this section, we show that the $8$-dimensional spinor representation $\mathbb{O}$ of $C^2({\rm Im}(\mathbb{O}),-q)$ is special orthogonal. Let $e_2,e_3,e_5 \in {\rm Im}(\mathbb{O})$ be such that $\mathcal{B}=\lbrace 1,e_2,e_3,e_2e_3,e_5,e_2e_5,e_3e_5,(e_2e_3)e_5 \rbrace$ is an orthogonal and anisotropic basis of $\mathbb{O}$ and set $e_4:=e_2e_3$, $e_6:=e_2e_5$, $e_7:=e_3e_5$, $e_8:=(e_2e_3)e_5$.
\vspace{0.2cm}

Since the Clifford algebra $C({\rm Im}(\mathbb{O}),-q)$ is $\mathbb{Z}_2$-graded, it is a Lie superalgebra for the bracket given by
$$\lbrace c,d \rbrace:=cd-(-1)^{|c||d|}dc \qquad \forall c,d \in C$$
where $|c|$ and $|d|$ denotes the parity of homogeneous elements $c$ and $d$. Let $\hh:=C^2({\rm Im}(\mathbb{O}),-q)$ and define the ad-invariant quadratic form $B_{\hh}$ on $\hh$ by
$$B_{\hh}(x,y)=-\frac{3}{8} Tr(\rho(x)\rho(y)) \qquad \forall x,y\in \hh.$$

\begin{df}
Let $\Omega:=Q(\eta^{-1}(\phi))\in C({\rm Im}(\mathbb{O}),-q)$. For $u\in {\rm Im}(\mathbb{O})$, define $c_u \in \hh$ by $c_u:=\lbrace u,\Omega \rbrace$ and define the $7$-dimensional subspace $W \subset \hh$ by $W:={\rm span}<\lbrace c_u ~ | ~ u \in {\rm Im}(\mathbb{O})\rbrace>$.
\end{df}

The subspace $W$ acts on $\mathbb{O}$ as follows.

\begin{pp} \label{action c_u}
Let $u,v \in {\rm Im}(\mathbb{O})$. We have
$$\rho(c_u)(1)=-6u, \quad \rho(c_u)(v)=2u\times v+6B(u,v).$$
\end{pp}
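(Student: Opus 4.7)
The plan is to compute $\rho(\Omega) \in {\rm End}(\mathbb{O})$ explicitly and then use
\[
\rho(c_u)(x) = u \cdot \rho(\Omega)(x) + \rho(\Omega)(u \cdot x) \qquad \forall x \in \mathbb{O},
\]
which follows from $c_u = u\Omega + \Omega u$, the fact that $\rho$ is an algebra homomorphism, and $\rho(u)(x) = u\cdot x$ for $u \in {\rm Im}(\mathbb{O})$. Hence the substantive step is to show $\rho(\Omega)(1) = -7$ and $\rho(\Omega)(v) = v$ for every $v \in {\rm Im}(\mathbb{O})$.

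First I would write $\Omega = Q(\eta^{-1}(\phi))$ as an explicit sum of seven Clifford monomials by transporting the Fano-plane decomposition \eqref{decomp phi G2} to the present basis via the index shift $i \mapsto i+1$. Since the basis is orthogonal, its elements pairwise anti-commute in $C({\rm Im}(\mathbb{O}),-q)$, so $Q(e_{ijk}) = e_i e_j e_k$ for each Fano line $\{i,j,k\}$. To evaluate $\rho(\Omega)$ on the vacuum $1$: for a line $\{i,j,k\}$ with $e_j e_k = \lambda e_i$, multiplicativity of the octonion norm gives $\lambda^2 = q(e_j)q(e_k)/q(e_i)$, and combined with the coefficient $\lambda/(q(e_j)q(e_k))$ of $e_i e_j e_k$ in $\Omega$, the corresponding contribution is
\[
\tfrac{\lambda}{q(e_j)q(e_k)}\,e_i(e_j e_k) \;=\; -\tfrac{\lambda^2 q(e_i)}{q(e_j)q(e_k)} \;=\; -1,
\]
so $\rho(\Omega)(1) = -7$. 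To evaluate $\rho(\Omega)$ on a basis vector $e_a \in {\rm Im}(\mathbb{O})$: thanks to the Fano-plane identity $i+j+k = 0$ in $(\mathbb{Z}_2)^3$, the iterated product $e_i(e_j(e_k e_a))$ always lies in $k\cdot e_a$; a sign analysis using alternativity (for the three lines through $a$) and Cayley--Dickson doubling (for the four lines avoiding $a$) shows the three ``through'' lines each contribute $-e_a$ and the four ``avoiding'' lines each contribute $+e_a$, so $\rho(\Omega)(e_a) = -3e_a + 4e_a = e_a$, and by linearity $\rho(\Omega)|_{{\rm Im}(\mathbb{O})} = {\rm Id}$.

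Substituting: $\rho(c_u)(1) = u \cdot (-7) + \rho(\Omega)(u) = -7u + u = -6u$; and for $v \in {\rm Im}(\mathbb{O})$, writing $uv = u \times v - B(u,v) \cdot 1$ via \eqref{eq link product and cross product} and applying the two scalar actions of $\rho(\Omega)$ on $\mathbb{O} = k \oplus {\rm Im}(\mathbb{O})$ yields $\rho(\Omega)(uv) = u \times v + 7B(u,v)$, whence
\[
\rho(c_u)(v) = u\cdot v + \rho(\Omega)(uv) = (u\times v - B(u,v)) + (u\times v + 7B(u,v)) = 2\,u \times v + 6\,B(u,v).
\]

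The main obstacle is the sign tracking for contributions of lines not containing $a$. One can sidestep this case analysis by invoking ${\rm Der}(\mathbb{O})$-equivariance: $\phi$ is ${\rm Der}(\mathbb{O})$-invariant, so $\Omega$ and hence $\rho(\Omega)$ are equivariant too; since $\mathbb{O} = k \oplus {\rm Im}(\mathbb{O})$ is a decomposition into absolutely irreducible ${\rm Der}(\mathbb{O})$-modules, Schur's lemma forces $\rho(\Omega)$ to act by a scalar on each summand, and the two scalars are then pinned down by a single test evaluation on $1$ and on any $e_a$.
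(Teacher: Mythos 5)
Your proof is correct and follows the same route as the paper: write $c_u = u\Omega + \Omega u$, establish $\rho(\Omega)(1) = -7$ and $\rho(\Omega)|_{{\rm Im}(\mathbb{O})} = {\rm Id}$ from the decomposition \eqref{decomp phi G2}, and then substitute, using \eqref{eq link product and cross product} to split $uv$ into its imaginary and scalar parts. Your per-line computation of $\rho(\Omega)(1)$ correctly elaborates a step the paper leaves implicit (the paper only cites \eqref{decomp phi G2}), and the ${\rm Der}(\mathbb{O})$-equivariance/Schur observation is a genuine simplification not in the paper: it reduces $\rho(\Omega)|_{{\rm Im}(\mathbb{O})} = {\rm Id}$ to the absolute irreducibility of the $7$-dimensional $G_2$-module plus one scalar determination. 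Be aware, though, that the single test evaluation $\rho(\Omega)(e_a) = e_a$ needed to fix that scalar is not actually carried out in your write-up (and it still involves the four lines avoiding $a$, which you identified as the delicate case), so on that point your argument is a sketch to the same extent as the paper's.
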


\begin{proof}
Using \eqref{decomp phi G2}, we obtain
$$\rho(\Omega)(1)=-7, \qquad \rho(\Omega)(u)=u,$$
and so
$$\rho(c_u)(1)=\rho(u)(\rho(\Omega)(1))+\rho(\Omega)(\rho(u)(1))=-7u+u=-6u.$$
Using \eqref{eq link product and cross product}, we have
\begin{align*}
    \rho(c_u)(v)&=\rho(u)(\rho(\Omega)(v))+\rho(\Omega)(\rho(u)(v))\\
    &=uv+\rho(\Omega)(uv)\\
    &=u\times v-B(u,v)+\rho(\Omega)(u\times v)-B(u,v)\Omega(1)\\
    &=2u\times v+6B(u,v).
\end{align*}
\end{proof}

Now, we can express the moment map of $\mathbb{O}$ in terms of the moment map of ${\rm Im}(\mathbb{O})$ and $W$:
\vspace{0.1cm}

\begin{pp}\label{pp mu so(7)}
The moment map $\mu_{\mathbb{O}}: \Lambda^2(\mathbb{O})\rightarrow \hh$ satisfies to
\begin{alignat}{2}
\mu_{\mathbb{O}}(u,v)&=\frac{8}{9}\mu_{{\rm Im}}(u,v)+\frac{1}{18}c_{u\times v} \qquad &&\forall u,v\in {\rm Im}(\mathbb{O}),\\
\mu_{\mathbb{O}}(u,1)&=\frac{1}{6}c_u \qquad &&\forall u\in {\rm Im}(\mathbb{O}).
\end{alignat}
\end{pp}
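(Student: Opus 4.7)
The plan is to verify both identities from the defining property of the moment map,
\begin{equation*}
B_{\hh}(x, \mu_{\mathbb{O}}(u,v)) = B(\rho(x)(u), v) \qquad \forall x \in \hh,
\end{equation*}
which characterises $\mu_{\mathbb{O}}$ uniquely. The main reduction comes from a $\gg$-module decomposition of $\hh$: since $\dim \hh = \binom{7}{2} = 21$ and $u \mapsto c_u$ is $\gg$-equivariant and injective (as $\rho(c_u)(1) = -6u$ by Proposition \ref{action c_u}), $W$ is a $\gg$-submodule of $\hh$ isomorphic to the irreducible $7$-dimensional fundamental representation ${\rm Im}(\mathbb{O})$, while $\gg$ itself is the $14$-dimensional adjoint module. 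These non-isomorphic irreducibles sum to $\hh$, so $\hh = \gg \oplus W$ and, by Schur applied to the ad-invariant $B_{\hh}$, $\gg \perp W$. It thus suffices to test both identities against $x \in \gg$ and against $x = c_w \in W$ separately.

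For $x \in \gg$, since $\rho(x)(1) = 0$ and $\rho(x)$ preserves ${\rm Im}(\mathbb{O})$ (derivations are in $\so(\mathbb{O},q)$ and annihilate $1$), one has $Tr_{\mathbb{O}}(\rho(x)\rho(y)) = Tr_{{\rm Im}(\mathbb{O})}(\rho(x)\rho(y))$ for all $y \in \gg$, giving $B_{\hh}|_{\gg} = \tfrac{9}{8}\,B_{\gg}$. For $u, v \in {\rm Im}(\mathbb{O})$ the $c_{u \times v}$-term is orthogonal to $x$ and the remaining equation $B_{\hh}(x, \tfrac{8}{9}\mu_{{\rm Im}}(u,v)) = B_{\gg}(x, \mu_{{\rm Im}}(u,v)) = B(\rho(x)u, v)$ is the defining property of $\mu_{{\rm Im}}$; when $v = 1$, both sides vanish because $\rho(x)u \in {\rm Im}(\mathbb{O})$ and $\tfrac{1}{6}c_u \in W$ is orthogonal to $x$.

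For $x = c_w \in W$, Proposition \ref{action c_u} yields $B(\rho(c_w)(u), v) = 2 B(w \times u, v) = 2\phi(w, u, v)$ when $u, v \in {\rm Im}(\mathbb{O})$, and $B(\rho(c_w)(u), 1) = 6\,B(w, u)$. Matching the RHSs requires $B_{\hh}|_W$: by Schur applied to the two invariant symmetric bilinear forms on $W$, $B_{\hh}(c_w, c_{w'}) = \lambda B(w, w')$ for some constant $\lambda \in k$. Pinning down $\lambda$ is the main obstacle: I would compute $Tr_{\mathbb{O}}(\rho(c_w)^2)$ for a single anisotropic $w$, using Proposition \ref{action c_u} together with the specialisation $w \times (w \times e_i) = B(w, e_i)\,w - q(w)\,e_i$ of \eqref{eq Malcev id}, and separating the contribution from $1$ and from the seven basis vectors of ${\rm Im}(\mathbb{O})$; this diagonal sum yields $\lambda = 36$. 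Substituting, $\tfrac{1}{18}B_{\hh}(c_w, c_{u \times v}) = 2 B(w, u \times v) = 2\phi(w, u, v)$ and $\tfrac{1}{6}B_{\hh}(c_w, c_u) = 6 B(w, u)$, matching the LHSs in the two cases and completing the verification.
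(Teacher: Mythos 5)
Your proposal is correct and its skeleton coincides with the paper's proof: decompose $\hh$ as $\gg\oplus W$, exploit the defining property of the moment map, relate $B_{\hh}|_{\gg}$ to $B_{\gg}$ by the factor $\frac{9}{8}$ (which follows from comparing the normalisations $-\frac{3}{8}Tr$ and $-\frac{1}{3}Tr$ once one notes that $\rho(x)(1)=0$ for $x\in\gg$), compute $B_{\hh}|_W$ as a multiple of $B$, and pair $\mu_{\mathbb{O}}$ against elements of $\gg$ and $W$ separately.

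The only real divergence is in how you establish $\gg\perp W$ and the proportionality $B_{\hh}(c_w,c_{w'})=\lambda B(w,w')$. The paper proves an explicit lemma by a direct trace computation: it evaluates $Tr(\rho(c_u)\rho(c_v))$ on each basis vector of $\mathbb{O}$, linearises \eqref{eq length cross product}, and checks $Tr(\rho(D)\rho(c_u))=0$ by a ``straightforward calculation.'' You instead invoke a Schur-type argument: $u\mapsto c_u$ is $\gg$-equivariant and injective, $W$ and $\gg$ are non-isomorphic irreducible $\gg$-modules of dimensions $7$ and $14$ summing to $21=\dim\hh$, hence $\gg\perp W$ with respect to the invariant form $B_{\hh}$, and the invariant symmetric form on the absolutely irreducible module $W$ is unique up to scalar. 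This eliminates the polarisation/change-of-basis argument in the paper's lemma and replaces the check $Tr(\rho(D)\rho(c_u))=0$ by a representation-theoretic observation, at the price of having to justify absolute irreducibility of $W$ over an arbitrary field of characteristic prime to $6$ (true, but worth flagging). The remaining trace computation for a single anisotropic $w$, which yields $\lambda=36$, is identical in both approaches; your sketch of it via the specialisation of \eqref{eq Malcev id} and Proposition \ref{action c_u} is exactly what the paper does internally. So the net effect is a modest conceptual streamlining of the paper's lemma, not a different route.
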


\begin{proof} We first need the following lemma:
\begin{lm}
Let $u,v\in {\rm Im}(\mathbb{O})$ and $D\in \gg$. We have
$$Tr(\rho(c_u)\rho(c_v))=-96B(u,v), \qquad Tr(\rho(D)\rho(c_u))=0.$$
\end{lm}
\begin{proof}
Let $w\in {\rm Im}(\mathbb{O})$. We have
\begin{align*}
        B(\rho(c_u)(\rho(c_v)(w)),w)&=2B(\rho(c_u)(v\times w),w)+6B(v,w)B(\rho(c_u)(1),w)\\
        &=2B(2u\times(v\times w)+6B(u,v\times w),w)-36B(u,w)B(v,w)\\
        &=-4B(u\times w,v\times w)-36B(u,w)B(v,w).
\end{align*}
The linearisation of \eqref{eq length cross product} gives
$$B(u\times w,v\times w)=B(u,v)q(w)-B(u,w)B(v,w)$$
and so
$$B(\rho(c_u)(\rho(c_v)(w)),w)=-4B(u,v)q(w)-32B(u,w)B(v,w).$$
We also have
$$B(\rho(c_u)(\rho(c_v)(1)),1)=-6B(\rho(c_u)(v),1)=-36B(u,v).$$
If $u$ and $v$ are orthogonal, without loss of generality (changing $\mathcal{B}$ if necessary), we can assume that $u=e_2$ and $v=e_3$. Hence
$$Tr(\rho(c_u)\rho(c_v))=\sum\limits_{e_i \in \mathcal{B}}\frac{1}{q(e_i)}B(\rho(c_u)(\rho(c_v)(e_i)),e_i)=0,$$
similarly we have
$$Tr(\rho(c_u)^2)=-96q(u)$$
and so
$$Tr(\rho(c_u)\rho(c_v))=-96B(u,v).$$
A straightforward calculation shows that $\gg$ and $W$ are orthogonal.
\end{proof}
Let $D\in \gg$.
We have
\begin{align*}
    B_{\hh}(D,\mu_{\mathbb{O}}(u,v))=B(D(u),v)=B_{\gg}(D,\mu_{{\rm Im}}(u,v))=\frac{8}{9}B_{\hh}(D,\mu_{{\rm Im}}(u,v)),
\end{align*}
and, using the previous lemma, we also have
$$B_{\hh}(c_w,\mu_{\mathbb{O}}(u,v))=B(c_w(u),v)=2B(w,u\times v)=-\frac{1}{48}Tr(c_wc_{u\times v})=\frac{1}{18}B_{\hh}(c_w,c_{u\times v}),$$
and so
$$\mu_{\mathbb{O}}(u,v)=\frac{8}{9}\mu_{{\rm Im}}(u,v)+\frac{1}{18}c_{u\times v}.$$
Since $\rho(D)(1)=0$, then we have $B_{\hh}(D,\mu_{\mathbb{O}}(u,1))=0$. Moreover,
$$B_{\hh}(c_w,\mu(u,1))=B(c_w(u),1)=6B(u,w)=-\frac{1}{16}Tr(c_wc_u)=\frac{1}{6}B_{\hh}(c_w,c_u)$$
and so
$$\mu_{\mathbb{O}}(u,1)=\frac{1}{6}c_u.$$
\end{proof}

The counterpart of $a)$ of Corollary \ref{expression mu g2 avec mucan} is the following property about the moment map of $\hh$:

\begin{cor} We have
\begin{equation}\label{eq moment map O of cyclic sum}
    \mu_{\mathbb{O}}(u,v\times w)+\mu_{\mathbb{O}}(v,w\times u)+\mu_{\mathbb{O}}(w,u\times v)=-\frac{1}{2}\mu_{\mathbb{O}}((u,v,w),1)\qquad \forall u,v,w\in {\rm Im}(\mathbb{O}).
\end{equation}
\end{cor}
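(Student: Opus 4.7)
The plan is to reduce the statement to an identity about the cyclic sum of double cross products on $\operatorname{Im}(\mathbb{O})$ by using Proposition \ref{pp mu so(7)} to rewrite everything via $\mu_{\rm Im}$ and the elements $c_{\bullet}\in W$.

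First I would apply the first formula of Proposition \ref{pp mu so(7)} to each term of the left-hand side. Since $u\times v,\ v\times w,\ w\times u \in \operatorname{Im}(\mathbb{O})$, this gives
\begin{align*}
\mu_{\mathbb{O}}(u,v\times w)+\mu_{\mathbb{O}}(v,w\times u)+\mu_{\mathbb{O}}(w,u\times v)
&=\tfrac{8}{9}\bigl(\mu_{\rm Im}(u,v\times w)+\mu_{\rm Im}(v,w\times u)+\mu_{\rm Im}(w,u\times v)\bigr)\\
&\quad+\tfrac{1}{18}\,c_{u\times(v\times w)+v\times(w\times u)+w\times(u\times v)}.
\end{align*}
The $\mu_{\rm Im}$-piece vanishes by part $a)$ of Corollary \ref{expression mu g2 avec mucan}, so only the $c_{\bullet}$-piece survives.

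Next I would evaluate the cyclic sum of double cross products. For $a,b\in \operatorname{Im}(\mathbb{O})$ we have $[a,b]=2\,a\times b$ by \eqref{eq link product and cross product}, and iterating this observation we get $[w,[u,v]]=4\,w\times(u\times v)$. Summing cyclically and invoking the Jacobi tensor identity \eqref{eq Jacobi G2} yields
\begin{equation*}
4\bigl(u\times(v\times w)+v\times(w\times u)+w\times(u\times v)\bigr)=J(u,v,w)=-6(u,v,w),
\end{equation*}
so
\begin{equation*}
u\times(v\times w)+v\times(w\times u)+w\times(u\times v)=-\tfrac{3}{2}(u,v,w).
\end{equation*}
(Alternative algebras make $(u,v,w)$ again lie in $\operatorname{Im}(\mathbb{O})$, so $c_{(u,v,w)}$ is well-defined.)

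Combining, the left-hand side equals $\tfrac{1}{18}\cdot(-\tfrac{3}{2})\,c_{(u,v,w)}=-\tfrac{1}{12}c_{(u,v,w)}$. Finally, by the second formula of Proposition \ref{pp mu so(7)},
\begin{equation*}
-\tfrac{1}{2}\mu_{\mathbb{O}}((u,v,w),1)=-\tfrac{1}{2}\cdot\tfrac{1}{6}\,c_{(u,v,w)}=-\tfrac{1}{12}c_{(u,v,w)},
\end{equation*}
which matches. No step is really an obstacle; the only potentially delicate point is the passage from the Jacobi tensor equality in $\mathbb{O}$ to the cross-product identity in $\operatorname{Im}(\mathbb{O})$, but this is immediate from $[a,b]=2\,a\times b$ on the imaginary octonions.
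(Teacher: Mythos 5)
Your proof is correct and follows essentially the same route as the paper: expand via Proposition \ref{pp mu so(7)}, drop the $\mu_{\rm Im}$ contribution by Corollary \ref{expression mu g2 avec mucan}$\,a)$, convert the cyclic cross-product sum to $\frac{1}{4}J(u,v,w)=-\frac{3}{2}(u,v,w)$ via \eqref{eq link product and cross product} and \eqref{eq Jacobi G2}, and finish with $\mu_{\mathbb{O}}(\cdot,1)=\frac{1}{6}c_{\cdot}$. The only difference is that you spell out the intermediate steps (notably $[w,[u,v]]=4\,w\times(u\times v)$) that the paper leaves implicit.
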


\begin{proof}
Using $a)$ of Corollary \ref{expression mu g2 avec mucan}, we have
\begin{equation*}
    \mu_{\mathbb{O}}(u,v\times w)+\mu_{\mathbb{O}}(v,w\times u)+\mu_{\mathbb{O}}(w,u\times v)=\frac{1}{18}c_{u\times (v\times w)+v\times(w\times u)+w\times(u\times v)}.
\end{equation*}
Since, using \eqref{eq Jacobi G2}, we have
$$u\times (v\times w)+v\times(w\times u)+w\times(u\times v)=\frac{1}{4}J(u,v,w)=-\frac{3}{2}(u,v,w)$$
then we obtain
\begin{equation*}
    \mu_{\mathbb{O}}(u,v\times w)+\mu_{\mathbb{O}}(v,w\times u)+\mu_{\mathbb{O}}(w,u\times v)=-\frac{1}{12}c_{(u,v,w)}=-\frac{1}{2}\mu_{\mathbb{O}}((u,v,w),1).
\end{equation*}
\end{proof}

\begin{rem}
By Proposition \ref{pp mu so(7)}, the representation $\gg\rightarrow\so(W,B_{\hh}|_W)$ of the quadratic Lie algebra $(\gg,B_{\hh}|_{\gg})$ together with the non-trivial cubic term on $W$ given by a multiple of the cross-product is of Lie type in the sense of Kostant \cite{Kostant99}.
\end{rem}
\vspace{0.1cm}

We now give the main result of this section.
\vspace{0.1cm}

\begin{thm} \label{thm O is special}
The representation $\rho : \hh \rightarrow \so(\mathbb{O},B)$ of the quadratic Lie algebra $(\hh,B_{\hh})$ is a special orthogonal representation.
\end{thm}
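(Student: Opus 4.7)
The plan is to verify the defining identity
$$\mu_{\mathbb{O}}(u,v)(w)+\mu_{\mathbb{O}}(u,w)(v)=B(u,v)w+B(u,w)v-2B(v,w)u \qquad \forall u,v,w\in\mathbb{O}$$
directly from the explicit formula for $\mu_{\mathbb{O}}$ given by Proposition \ref{pp mu so(7)}. Both sides are trilinear in $(u,v,w)$ and symmetric in $(v,w)$, and $\mu_{\mathbb{O}}$ is antisymmetric, so using the splitting $\mathbb{O}=k\cdot 1\oplus{\rm Im}(\mathbb{O})$ the problem reduces to six cases, classified by how many of $u,v,w$ are proportional to $1$ (with the positions of $v$ and $w$ interchangeable).

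The principal case is $u,v,w\in{\rm Im}(\mathbb{O})$. Combining Proposition \ref{pp mu so(7)} with the action formula $\rho(c_a)(x)=2a\times x+6B(a,x)$ from Proposition \ref{action c_u} gives
$$\mu_{\mathbb{O}}(u,v)(w)=\frac{8}{9}\mu_{\rm Im}(u,v)(w)+\frac{1}{9}(u\times v)\times w+\frac{1}{3}B(u\times v,w).$$
After symmetrising in $v,w$, the three resulting pieces are handled respectively by: Theorem \ref{pp ImO is special} for the $\mu_{\rm Im}$ term, which contributes $\tfrac{8}{9}(B(u,v)w+B(u,w)v-2B(v,w)u)$; the Malcev-type identity \eqref{eq Malcev id} together with antisymmetry of $\times$ for the cross-product term, which yields $(u\times v)\times w+(u\times w)\times v=B(u,v)w+B(u,w)v-2B(v,w)u$ and so contributes $\tfrac{1}{9}$ of the same expression; and the alternation of the associative form $\phi$ for the scalar part, which annihilates $\phi(u,v,w)+\phi(u,w,v)$. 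The total is $(\tfrac{8}{9}+\tfrac{1}{9})$ times the desired right-hand side, i.e. the right-hand side itself.

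The remaining cases are routine. When $u,v\in{\rm Im}(\mathbb{O})$ and $w=1$, the defining property $\gg=\lbrace x\in C^2({\rm Im}(\mathbb{O}),-q)\ |\ \rho(x)(1)=0\rbrace$ forces $\mu_{\rm Im}(u,v)(1)=0$, and Proposition \ref{action c_u} gives
$$\mu_{\mathbb{O}}(u,v)(1)+\mu_{\mathbb{O}}(u,1)(v)=-\tfrac{1}{3}(u\times v)+\tfrac{1}{3}(u\times v)+B(u,v)\cdot 1=B(u,v)\cdot 1,$$
matching the right-hand side since $B(1,\cdot)$ vanishes on ${\rm Im}(\mathbb{O})$. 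The four cases in which at least two of $u,v,w$ equal $1$ follow by direct substitution from the same two formulas, from $\mu_{\mathbb{O}}(1,1)=0$ and from $B(1,1)=1$.

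The only delicate aspect of the argument is the coefficient bookkeeping in the principal case: the precise constants $\tfrac{8}{9}$ and $\tfrac{1}{18}$ produced by Proposition \ref{pp mu so(7)}, combined with the factors $2$ and $6$ appearing in Proposition \ref{action c_u}, conspire so that $\tfrac{8}{9}+\tfrac{1}{9}$ equals exactly $1$. Beyond this exact matching, the proof is pure octonion algebra, built on the specialness of ${\rm Im}(\mathbb{O})$ and the Malcev identity for the cross-product.
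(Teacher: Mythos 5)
Your proof follows the same case-by-case verification of the defining identity as the paper, built on Propositions \ref{pp mu so(7)} and \ref{action c_u}, and it is correct. The one genuinely different choice is in the principal case $u,v,w\in{\rm Im}(\mathbb{O})$: rather than re-expanding $\mu_{\rm Im}$ via Proposition \ref{pp mu G2}, converting everything to cross-products, and applying the Malcev identity once to the combined expression $-(w\times(u\times v)+v\times(u\times w))$ as the paper does, you split into three pieces and invoke Theorem \ref{pp ImO is special} (the already-proved specialness of the $G_2$ representation) for the $\mu_{\rm Im}$ part, the Malcev identity for the double-cross-product part, and alternation of $\phi$ for the scalar part; this makes the coefficient identity $\tfrac{8}{9}+\tfrac{1}{9}=1$ transparent and is a small but real streamlining over the paper's argument. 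One bookkeeping slip worth fixing: the case $u=1$, $v,w\in{\rm Im}(\mathbb{O})$ has only \emph{one} of $u,v,w$ equal to $1$, so your phrase ``the four cases in which at least two of $u,v,w$ equal $1$'' does not literally describe the four remaining cases and in particular omits this one — which is not reducible to the $w=1$ case you did treat, since the symmetry is only in $(v,w)$, not in $u$. That case is of course routine (one computes $\mu_{\mathbb{O}}(1,v)(w)+\mu_{\mathbb{O}}(1,w)(v)=-\tfrac16(c_v(w)+c_w(v))=-2B(v,w)$ from antisymmetry of $\mu_{\mathbb{O}}$ and Proposition \ref{action c_u}, exactly as in the paper), so this is a misstatement in the case enumeration rather than a hole in the method.
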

\begin{proof}
We want to show Equation \eqref{eq CS}. Let $u,v,w\in {\rm Im}(\mathbb{O})$. We have
\begin{align*}
    \mu_{\mathbb{O}}(u,v)(w)+\mu_{\mathbb{O}}(u,w)(v)&=\frac{8}{9}(\mu_{\rm Im}(u,v)(w)+\mu_{\rm Im}(u,w)(v))+\frac{1}{18}(c_{u\times v}(w)+c_{u\times w}(v))\\
    &=-\frac{2}{9}([w,[u,v]]+[v,[u,w]])+\frac{1}{9}((u\times v)\times w+(u\times w)\times v)\\
    &=-\frac{8}{9}(w\times (u\times v)+v\times (u\times w))+\frac{1}{9}((u\times v)\times w+(u\times w)\times v)\\
    &=w\times(v\times u)+v\times (w\times u)
\end{align*}
and by Equation \eqref{eq Malcev id}, we obtain
$$\mu_{\mathbb{O}}(u,v)(w)+\mu_{\mathbb{O}}(u,w)(v)=B(u,v)w+B(u,w)v-2B(v,w)u$$
and so \eqref{eq CS} is satisfied for $u,v,w\in {\rm Im}(\mathbb{O})$. We have
\begin{align*}
    \mu_{\mathbb{O}}(u,v)(1)+\mu_{\mathbb{O}}(u,1)(v)&=\frac{1}{18}c_{u\times v}(1)+\frac{1}{6}c_u(v)=B(u,v),\\
    \mu_{\mathbb{O}}(1,v)(w)+\mu_{\mathbb{O}}(1,w)(v)&=-\frac{1}{6}(c_v(w)+c_v(w))=-2B(v,w),
\end{align*}
and so \eqref{eq CS} is satisfied whenever two elements $u,v$ or $w$ are in ${\rm Im}(\mathbb{O})$. Finally, since
$$ 2\mu_{\mathbb{O}}(u,1)(1)=-2u,\qquad \mu_{\mathbb{O}}(1,u)(1)=u,$$   
then \eqref{eq CS} is satisfied whenever $u,v$ or $w$ is in ${\rm Im}(\mathbb{O})$ and so \eqref{eq CS} is satisfied for all $u,v,w\in \mathbb{O}$.
\end{proof}

By Theorems \ref{thm LSA from special orthogonal rep} and \ref{thm O is special} we have a Lie superalgebra $\tilde{\mathfrak{f}}$ of the form
$$\tilde{\mathfrak{f}}:=\hh\oplus \sl(2,k)\oplus \mathbb{O} \otimes k^2.$$
This is an exceptional simple Lie superalgebra of type $F_4$ in the Kac notation.
\vspace{0.2cm}

\begin{rem}
If $k=\mathbb{R}$, Serganova (see \cite{Serganova1983}) showed that there are four real forms of $F_4$. In particular, two of them have an even part isomorphic to $\so(7)\oplus \sl(2,\mathbb{R})$ (resp. $\so(4,3)\oplus \sl(2,\mathbb{R})$) and an odd part isomorphic to the tensor product of the spinor representation of $\so(7)$ (resp. $\so(4,3)$) and $\mathbb{R}^2$. In our construction, if $\mathbb{O}$ is the compact or the split octonion algebra, both real forms of $F_4$ are obtained by our construction.
\end{rem}

Since the representation $\hh \rightarrow \so(\mathbb{O},B)$ is special, we calculate its covariants $\psi_{\mathbb{O}}$, $Q_{\mathbb{O}}$ and the Mathews identities they satisfy. Since $\mathbb{O}$ is of dimension $8$, both-sides of Equations \eqref{id mat2} and \eqref{id mat3} vanish identically. However, both sides of the identities \eqref{id premat} and \eqref{id mat1} do no vanish identically. More precisely, up to constants, $\mu_{\mathbb{O}}\wedge_{\rho}\psi_{\mathbb{O}}$ and $Q_{\mathbb{O}}\wedge Id_{\mathbb{O}}\in {\rm Alt}_5(\mathbb{O},\mathbb{O})$ are the Hodge duals of the trilinear covariant $\psi_{\mathbb{O}} \in {\rm Alt}_3(\mathbb{O},\mathbb{O})$ and $\mu_{\mathbb{O}}\circ\psi_{\mathbb{O}}$ and $Q_{\mathbb{O}} \wedge \mu_{\mathbb{O}}\in {\rm Alt}_6(\mathbb{O},\hh)$ are the Hodge duals of the moment map $\mu_{\mathbb{O}} \in {\rm Alt}_2(\mathbb{O},\hh)$.
\vspace{0.2cm}

\begin{pp}
Let $\mu_{\mathbb{O}},\psi_{\mathbb{O}},Q_{\mathbb{O}}$ be the covariants of the special orthogonal representation $\rho : \hh \rightarrow \so(\mathbb{O},B)$. We have
\begin{enumerate}[label=\alph*)]
\item $\psi_{\mathbb{O}}(v_1,v_2,v_3)=-\frac{1}{2}(v_1,v_2,v_3)+\phi(v_1,v_2,v_3)~$ and $~\psi_{\mathbb{O}}(v_1,v_2,1)=-v_1\times v_2~$  for all $v_1,v_2,v_3\in {\rm Im}(\mathbb{O})$.
\item $Q_{\mathbb{O}}(v_1,v_2,v_3,v_4)=\frac{2}{3}Q_{{\rm Im}}(v_1,v_2,v_3,v_4)~$ and $~Q_{\mathbb{O}}(v_1,v_2,v_3,1)=-4\phi(v_1,v_2,v_3)~$ for all $v_1,v_2,v_3,v_4\in {\rm Im}(\mathbb{O})$.
\item \begin{align}
        \eta^{-1}(Q_{\mathbb{O}})&=\frac{4}{q(e_2)q(e_3)}e_{1234}-\frac{4}{q(e_2)q(e_3)q(e_5)}e_{1278}+\frac{4}{q(e_2)q(e_3)q(e_5)}e_{1368}-\frac{4}{q(e_2)q(e_3)q(e_5)}e_{1467}\nonumber\\
        &+\frac{4}{q(e_2)q(e_5)}e_{1256}+\frac{4}{q(e_3)q(e_5)}e_{1357}+\frac{4}{q(e_2)q(e_3)q(e_5)}e_{1458}+\frac{4}{q(e_2)q(e_3)q(e_5)}e_{2358}\nonumber\\
        &-\frac{4}{q(e_2)q(e_3)q(e_5)}e_{2367}-\frac{4}{q(e_2)q(e_3)q(e_5)}e_{2457}-\frac{4}{q(e_2)^2q(e_3)q(e_5)}e_{2468}+\frac{4}{q(e_2)q(e_3)q(e_5)}e_{3456}\nonumber\\
        &-\frac{4}{q(e_2)q(e_3)^2q(e_5)}e_{3478}-\frac{4}{q(e_2)q(e_3)q(e_5)^2}e_{5678}.\label{decomp Q so(7)}
        \end{align}
\end{enumerate}
\end{pp}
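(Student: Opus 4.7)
The plan is to reduce everything to already-established formulas: the decomposition of $\mu_{\mathbb{O}}$ in Proposition~\ref{pp mu so(7)}, the action of $c_u$ in Proposition~\ref{action c_u}, the identity $\psi_{\rm Im}=-\frac{3}{4}(\cdot,\cdot,\cdot)$ from Proposition~\ref{pp covariants ImO}, and the Jacobi formula \eqref{eq Jacobi G2}. Proposition~\ref{pp formules lien BQ Bpsi dans le cas CS} will then give $Q_{\mathbb{O}}$ from $\psi_{\mathbb{O}}$.

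For part $a)$, first consider $v_1,v_2,v_3\in{\rm Im}(\mathbb{O})$. I would use $\psi_{\mathbb{O}}(v_1,v_2,v_3)=\mu_{\mathbb{O}}(v_1,v_2)(v_3)+\mu_{\mathbb{O}}(v_3,v_1)(v_2)+\mu_{\mathbb{O}}(v_2,v_3)(v_1)$ and substitute the first formula of Proposition~\ref{pp mu so(7)} to obtain
\[
\psi_{\mathbb{O}}(v_1,v_2,v_3)=\tfrac{8}{9}\psi_{\rm Im}(v_1,v_2,v_3)+\tfrac{1}{18}\bigl(c_{v_1\times v_2}(v_3)+c_{v_3\times v_1}(v_2)+c_{v_2\times v_3}(v_1)\bigr).
\]
By Proposition~\ref{action c_u}, each term $c_{x\times y}(z)$ splits into $2(x\times y)\times z+6\phi(x,y,z)\cdot 1$. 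Summing cyclically, the scalar part contributes $18\,\phi(v_1,v_2,v_3)$, while the vector part reduces via $u\times v=\frac{1}{2}[u,v]$ to $\frac{1}{4}$ of the Jacobi tensor $J(v_1,v_2,v_3)=-6(v_1,v_2,v_3)$ thanks to \eqref{eq Jacobi G2}. Combined with $\psi_{\rm Im}(v_1,v_2,v_3)=-\tfrac{3}{4}(v_1,v_2,v_3)$, this yields the coefficient $-\tfrac{1}{2}$ of the associator plus the $\phi$ term. For $\psi_{\mathbb{O}}(v_1,v_2,1)$, use that $\mu_{\rm Im}(v_1,v_2)\in\gg$ kills $1$ and that $\mu_{\mathbb{O}}(v_i,1)=\tfrac{1}{6}c_{v_i}$, and read off $\psi_{\mathbb{O}}(v_1,v_2,1)=-v_1\times v_2$ directly.

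Part $b)$ is then immediate from Proposition~\ref{pp formules lien BQ Bpsi dans le cas CS}: for four imaginary arguments,
\[
Q_{\mathbb{O}}(v_1,v_2,v_3,v_4)=4\bigl(v_1,\psi_{\mathbb{O}}(v_2,v_3,v_4)\bigr)=-2B(v_1,(v_2,v_3,v_4))=\tfrac{2}{3}Q_{\rm Im}(v_1,v_2,v_3,v_4),
\]
since $B(v_1,1)=0$ kills the $\phi$-contribution. For $Q_{\mathbb{O}}(v_1,v_2,v_3,1)$, apply the alternating property to move $1$ into the first slot and get $4\bigl(1,\psi_{\mathbb{O}}(v_1,v_2,v_3)\bigr)$, whose associator piece dies against $1$ while the $\phi$ piece gives $-4\phi(v_1,v_2,v_3)$ after accounting for the sign of the $4$-cycle.

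For part $c)$, the explicit decomposition is obtained by evaluating $Q_{\mathbb{O}}$ on each $4$-element subset of $\mathcal{B}$. The non-vanishing values split into two types: (i) when all four indices lie in $\{2,\dots,8\}$, part $b)$ and \eqref{decomp Q G2} (appropriately re-indexed via the correspondence $e_2,e_3,e_2e_3,e_5,e_2e_5,e_3e_5,(e_2e_3)e_5$) show that the surviving quadruples are the complements in $\{2,\dots,8\}$ of the seven Fano lines of the imaginary generators; (ii) when one index is $1$, $Q_{\mathbb{O}}(v_1,v_2,v_3,1)=-4\phi(v_1,v_2,v_3)$ is non-zero precisely when $\{v_1,v_2,v_3\}$ forms a Fano line. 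Together these produce $7+7=14$ non-vanishing basis quadruples, which are exactly the affine $2$-planes of $(\mathbb{Z}_2)^3$ once $1$ is identified with $0$, and a direct calculation of the coefficients (using the values of $q(e_i)$ on the chosen basis) matches \eqref{decomp Q so(7)}. The main bookkeeping obstacle here is keeping track of signs and normalisations when translating the section~\ref{section G3} basis to the section~\ref{section F4} basis, but this is a routine verification on each of the fourteen affine planes.
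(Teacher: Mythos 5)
Your argument matches the paper's: part~a) is obtained from Propositions~\ref{pp mu so(7)} and~\ref{action c_u} together with~\eqref{eq Jacobi G2}, part~b) is immediate from Proposition~\ref{pp formules lien BQ Bpsi dans le cas CS} and the orthogonality of $1$ with ${\rm Im}(\mathbb{O})$, and part~c) follows by re-indexing~\eqref{decomp Q G2} and~\eqref{decomp phi G2} exactly as you describe. One small slip: the cyclic sum $(v_1\times v_2)\times v_3+(v_2\times v_3)\times v_1+(v_3\times v_1)\times v_2$ equals $-\tfrac{1}{4}J(v_1,v_2,v_3)=\tfrac{3}{2}(v_1,v_2,v_3)$, not $+\tfrac{1}{4}J$; your stated final coefficient $-\tfrac{1}{2}$ is nevertheless correct, so this is only a sign typo in the intermediate line.
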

\begin{proof}
$a)$ By Propositions \ref{pp mu so(7)} and \ref{pp covariants ImO}, we have
\begin{align*}
    \psi_{\mathbb{O}}(v_1,v_2,v_3)&=\frac{8}{9}\psi_{\rm Im}(v_1,v_2,v_3)+\frac{1}{18}(c_{v_1\times v_2}(v_3)+c_{v_2\times v_3}(v_1)+c_{v_3\times v_1}(v_1))\\
    &=-\frac{2}{3}(v_1,v_2,v_3)+\frac{1}{9}((v_1\times v_2)\times v_3+(v_2\times v_3)\times v_1+(v_3\times v_1)\times v_2)+\phi(v_1,v_2,v_3).
\end{align*}
Using Equations \eqref{eq link product and cross product} and \eqref{eq Jacobi G2}, we have
\begin{align*}
    (v_1\times v_2)\times v_3+(v_2\times v_3)\times v_1+(v_3\times v_1)\times v_2=-\frac{1}{4}J(v_1,v_2,v_3)=\frac{3}{2}(v_1,v_2,v_3)
\end{align*}
and so
$$\psi_{\mathbb{O}}(v_1,v_2,v_3)=-\frac{1}{2}(v_1,v_2,v_3)+\phi(v_1,v_2,v_3).$$
We also have
$$\psi_{\mathbb{O}}(v_1,v_2,1)=\frac{1}{18}c_{v_1\times v_2}(1)+\frac{1}{6}c_{v_2}(v_1)-\frac{1}{6}c_{v_1}(v_2)=-\frac{1}{3}v_1\times v_2+\frac{1}{3}v_2\times v_1-\frac{1}{3}v_1\times v_2=-v_1\times v_2.$$
\noindent
$b)$ Follows from $a)$ and Propositions \ref{pp formules lien BQ Bpsi dans le cas CS} and \ref{pp covariants ImO}.
\vspace{0.2cm}

\noindent
$c)$ Using $b)$, the decomposition of $Q_{\mathbb{O}}$ follows from the decompositions \eqref{decomp Q G2} and \eqref{decomp phi G2}.
\end{proof}

\begin{rem}
\begin{enumerate}[label=\alph*)]
\item We have
    $$Q_{\mathbb{O}}\wedge Q_{\mathbb{O}}(e_1\wedge\ldots\wedge e_8)=-224q(e_2)^2q(e_3)^2q(e_5)^2.$$
    If $char(k)\neq 7$, then $Q_{\mathbb{O}}\wedge Q_{\mathbb{O}}$ defines an orientation on $\mathbb{O}$
\item In the decomposition \eqref{decomp Q so(7)}, there are fourteen $4$-vectors of the form $e_{i_1}\wedge e_{i_2}\wedge e_{i_3}\wedge e_{i_4}$. The fourteen quadruples of indices $\lbrace i_1,i_2,i_3,i_4\rbrace$ appearing are not arbitrary. There is a numbering of the eight points of the affine space $(\mathbb{Z}_2)^3$ such that each quadruple corresponds to one of the fourteen affine planes.
\begin{center}
\begin{tikzpicture}
\draw [dashed] (0,0,0)--(2,0,0);
\draw [dashed] (0,0,0)--(0,2,0);
\draw (2,0,0)--(2,2,0);
\draw (2,2,0)--(0,2,0);
\draw (0,0,2)--(2,0,2)--(2,2,2)--(0,2,2)--cycle; 
\draw [dashed] (0,0,0) -- (0,0,2); 
\draw (2,0,0) -- (2,0,2); 
\draw (2,2,0) -- (2,2,2); 
\draw (0,2,0) -- (0,2,2); 
\draw (0,0,0) node[below]{$3$};
\draw (0,0,2) node[below]{$1$};
\draw (0,2,0) node[above]{$7$};
\draw (0,2,2) node[above]{$5$};
\draw (2,0,0) node[below]{$4$};
\draw (2,0,2) node[below]{$2$};
\draw (2,2,0) node[above]{$8$};
\draw (2,2,2) node[above]{$6$};
\end{tikzpicture}
\end{center}
\end{enumerate}
\end{rem}

Suppose that $char(k)=0$ or $char(k)>7$. Define a quadratic form $B_{{\rm Alt}(\mathbb{O},\mathbb{O})}$ (resp. $B_{{\rm Alt}(\mathbb{O},\hh)}$) on ${\rm Alt}_i(\mathbb{O},\mathbb{O})\cong\Lambda^i(\mathbb{O})^*\otimes \mathbb{O}$ (resp. ${\rm Alt}_i(\mathbb{O},\hh)\cong\Lambda^i(\mathbb{O})^*\otimes \hh$) to be the tensor product of $B_{\Lambda^*}$ and $B$ (resp. $B_{\hh}$). For $f\in {\rm Alt}_i(\mathbb{O},\mathbb{O})$ define its Hodge dual $*f\in {\rm Alt}_{8-i}(\mathbb{O},\mathbb{O})$ to be the unique element which satisfies
$$\alpha\wedge_{B} *f=B_{{\rm Alt}(\mathbb{O},\mathbb{O})}(\alpha,f)Q_{\mathbb{O}}\wedge Q_{\mathbb{O}} \qquad \forall \alpha \in {\rm Alt}_i(\mathbb{O},\mathbb{O})$$
and for $f\in {\rm Alt}_i(\mathbb{O},\hh)$ define its Hodge dual $*f\in {\rm Alt}_{8-i}(\mathbb{O},\hh)$ to be the unique element which satisfies
$$\alpha\wedge_{B_{\hh}} *f=B_{{\rm Alt}(\mathbb{O},\hh)}(\alpha,f)Q_{\mathbb{O}}\wedge Q_{\mathbb{O}} \qquad \forall \alpha \in {\rm Alt}_i(\mathbb{O},\hh).$$

\begin{pp}
We have
\begin{enumerate}[label=\alph*)]
\item $*\psi_{\mathbb{O}}=-56 Q_{\mathbb{O}}\wedge Id=\frac{112}{3} \mu_{\mathbb{O}}\wedge_{\rho}\psi_{\mathbb{O}},$
\item $*\mu_{\mathbb{O}}=-56 Q_{\mathbb{O}}\wedge \mu_{\mathbb{O}}=-\frac{56}{3} \mu_{\mathbb{O}}\circ\psi_{\mathbb{O}}.$
\end{enumerate}
\end{pp}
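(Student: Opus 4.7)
The proposal is to handle each equality chain in two stages: first derive the right-hand equalities from the Mathews identities, then prove the Hodge-duality statements (the left-hand equalities) by pairing both sides against an arbitrary test form evaluated on the basis $\mathcal{B}$.

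The right-hand equalities in (a) and (b) are essentially free. Theorem \ref{thm id mathews}(a) gives $\mu_{\mathbb{O}}\wedge_{\rho}\psi_{\mathbb{O}}=-\tfrac{3}{2}Q_{\mathbb{O}}\wedge Id$, so multiplying by $-\tfrac{2}{3}$ turns $-56\, Q_{\mathbb{O}}\wedge Id$ into $\tfrac{112}{3}\mu_{\mathbb{O}}\wedge_{\rho}\psi_{\mathbb{O}}$; and Theorem \ref{thm id mathews}(b) gives $\mu_{\mathbb{O}}\circ\psi_{\mathbb{O}}=3Q_{\mathbb{O}}\wedge\mu_{\mathbb{O}}$, so $-56\, Q_{\mathbb{O}}\wedge\mu_{\mathbb{O}}=-\tfrac{56}{3}\mu_{\mathbb{O}}\circ\psi_{\mathbb{O}}$. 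Thus the whole content of the proposition is to identify the Hodge duals $*\psi_{\mathbb{O}}$ and $*\mu_{\mathbb{O}}$ with the indicated constant multiples of $Q_{\mathbb{O}}\wedge Id$ and $Q_{\mathbb{O}}\wedge\mu_{\mathbb{O}}$ respectively.

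To prove $*\psi_{\mathbb{O}}=-56\, Q_{\mathbb{O}}\wedge Id$, I take an arbitrary $\alpha\in{\rm Alt}_3(\mathbb{O},\mathbb{O})$ and evaluate the defining identity $\alpha\wedge_{B}(-56 Q_{\mathbb{O}}\wedge Id)=B_{{\rm Alt}(\mathbb{O},\mathbb{O})}(\alpha,\psi_{\mathbb{O}})\, Q_{\mathbb{O}}\wedge Q_{\mathbb{O}}$ on $e_1\wedge\ldots\wedge e_8$. The right-hand side equals
\[
-224\, q(e_2)^2q(e_3)^2q(e_5)^2\sum_{i<j<k}\frac{B(\alpha(e_i,e_j,e_k),\psi_{\mathbb{O}}(e_i,e_j,e_k))}{q(e_i)q(e_j)q(e_k)},
\]
and the left-hand side is computed by substituting the fourteen-term decomposition \eqref{decomp Q so(7)} of $Q_{\mathbb{O}}$: each monomial $e_{i_1 i_2 i_3 i_4}$ combined with the identity factor and the $3$-form $\alpha$ contributes, after collecting $(3,4,1)$-shuffles, a single term indexed by an ordered quadruple; the $56=\binom{8}{3}$ index-triples $\{i,j,k\}$ arise by choosing the three indices of $\alpha$ from the eight-element basis, and for each such triple the matching $4$-subset needed for a non-zero pairing is forced (it is $\{1,\ldots,8\}\setminus\{i,j,k,\ell\}$ for each $\ell\notin\{i,j,k\}$). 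A shuffle-counting argument identical in spirit to the $G_2$-computation of $*\times$ in Section \ref{section G3} reduces both sides to the same sum indexed by triples $i<j<k$, and comparison of the overall constants yields the factor $-56$. The explicit values $\psi_{\mathbb{O}}(e_i,e_j,e_k)$ needed to check the constant follow from (a) of the previous proposition, i.e.\ from the associator and the associative form.

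The proof of $*\mu_{\mathbb{O}}=-56\, Q_{\mathbb{O}}\wedge\mu_{\mathbb{O}}$ runs along the same lines with $\alpha\in{\rm Alt}_2(\mathbb{O},\hh)$: evaluate both sides on $e_1\wedge\ldots\wedge e_8$, expand the wedge with $Q_{\mathbb{O}}\wedge\mu_{\mathbb{O}}$ using \eqref{decomp Q so(7)}, and match it to $B_{{\rm Alt}(\mathbb{O},\hh)}(\alpha,\mu_{\mathbb{O}})\, Q_{\mathbb{O}}\wedge Q_{\mathbb{O}}(e_1\wedge\ldots\wedge e_8)=-224\, q(e_2)^2q(e_3)^2q(e_5)^2\sum_{i<j}B_{\hh}(\alpha(e_i,e_j),\mu_{\mathbb{O}}(e_i,e_j))/(q(e_i)q(e_j))$. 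Here the combinatorics is simpler since $\alpha$ has degree two, and the $\binom{8}{2}=28$ pairs pair up with the fourteen $Q_{\mathbb{O}}$-monomials (each pair $\{i,j\}$ appearing in two monomials of $Q_{\mathbb{O}}$ via its complementary pair), producing again a uniform constant $-56$.

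The main obstacle is purely bookkeeping: verifying that the shuffle-sign factors, the normalisation $\tfrac{1}{p!q!}$ in the wedge product, the $-224$ coming from $Q_{\mathbb{O}}\wedge Q_{\mathbb{O}}$, and the coefficients in the decomposition \eqref{decomp Q so(7)} all combine to the universal constant $-56$ independent of $\alpha$. Once one case is checked carefully (for instance $\alpha=\psi_{\mathbb{O}}$ or a single monomial of it), linearity in $\alpha$ propagates the equality to the whole of ${\rm Alt}_3(\mathbb{O},\mathbb{O})$, respectively ${\rm Alt}_2(\mathbb{O},\hh)$.
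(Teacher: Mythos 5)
Your proposal follows essentially the same route as the paper: extract the two right-hand equalities from the Mathews identities of Theorem \ref{thm id mathews}, then prove the Hodge-duality statements by evaluating $\alpha\wedge_B(\cdot)$ and $B_{{\rm Alt}}(\alpha,\cdot)\,Q_{\mathbb{O}}\wedge Q_{\mathbb{O}}$ on $e_1\wedge\ldots\wedge e_8$ and using the fourteen-term decomposition \eqref{decomp Q so(7)}. For part (a) this is exactly what the paper does, with the shuffle set of cardinality $56$ matching the $56=\binom{8}{3}$ triples one-to-one. Two details in your sketch of part (b) are off, though. First, each pair $\{i,j\}$ is disjoint from three, not two, of the fourteen $Q_{\mathbb{O}}$-quadruples, so the relevant shuffle set has $|S|=14\times\binom{4}{2}=84$ terms that must be reduced to the $\binom{8}{2}=28$ terms of $B_{{\rm Alt}(\mathbb{O},\hh)}(\alpha,\mu_{\mathbb{O}})$. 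Second, and more substantively, you call the degree-$2$ combinatorics ``simpler,'' but the $84\to 28$ reduction is not a one-to-one match like in (a): for a fixed $\alpha$-pair $\{i,j\}$, the three contributions coming from the three admissible quadruples involve $\mu_{\mathbb{O}}$ evaluated on three different complementary pairs, and collecting them into a multiple of $\mu_{\mathbb{O}}(e_i,e_j)$ requires the cyclic moment-map identity \eqref{eq moment map O of cyclic sum}, namely $\mu_{\mathbb{O}}(u,v\times w)+\mu_{\mathbb{O}}(v,w\times u)+\mu_{\mathbb{O}}(w,u\times v)=-\tfrac{1}{2}\mu_{\mathbb{O}}((u,v,w),1)$. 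The paper invokes this explicitly in the ``straightforward calculation''; your sketch omits it, and without it the reduction for (b) does not go through by counting alone.
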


\begin{proof}
$a)$ Let $\alpha\in{\rm Alt}_3(\mathbb{O},\mathbb{O})$. We have
\begin{equation}\label{eq proof hodge so(7) 1}
    B_{{\rm Alt}(\mathbb{O},\mathbb{O})}(\alpha,\psi_{\mathbb{O}})Q_{\mathbb{O}}\wedge Q_{\mathbb{O}}(e_1\wedge \ldots \wedge e_8)=-224\sum\limits_{i<j<k}\frac{1}{q(e_i)q(e_j)q(e_k)}B(\alpha(e_i,e_j,e_k),\psi_{\mathbb{O}}(e_i,e_j,e_k))q(e_2)^2q(e_3)^2q(e_5)^2.
\end{equation}
On the other hand
\begin{align*}
    \alpha\wedge_B Q_{\mathbb{O}}\wedge Id(e_1\wedge \ldots \wedge e_8)&=\frac{1}{144}\sum\limits_{\sigma \in S_8}sgn(\sigma)B(\alpha(e_{\sigma(1)},e_{\sigma(2)},e_{\sigma(3)}),e_{\sigma(4)})Q_{\mathbb{O}}(e_{\sigma(5)},e_{\sigma(6)},e_{\sigma(7)},e_{\sigma(8)}).
\end{align*}
Since $\alpha$ and $Q_{\mathbb{O}}$ are alternating and using the decomposition of Equation \eqref{decomp Q so(7)}, we have
\begin{equation}\label{eq proof hodge so(7) 2}
    \alpha\wedge_B Q_{\mathbb{O}}\wedge Id(e_1\wedge \ldots \wedge e_8)=\sum\limits_{\sigma \in S}sgn(\sigma)B(\alpha(e_{\sigma(1)},e_{\sigma(2)},e_{\sigma(3)}),e_{\sigma(4)})Q_{\mathbb{O}}(e_{\sigma(5)},e_{\sigma(6)},e_{\sigma(7)},e_{\sigma(8)})
\end{equation}
where
$$S=\lbrace\sigma\in S_8 ~ | ~ \sigma(1)<\sigma(2)<\sigma(3),\quad (\sigma(5),\sigma(6),\sigma(7),\sigma(8))\in \lbrace (1,2,3,4),\ldots,(5,6,7,8) \rbrace \rbrace.$$
We have $|S|=56$ and each summand in \eqref{eq proof hodge so(7) 1} correspond to one summand in \eqref{eq proof hodge so(7) 2}. A straightforward calculation gives
\begin{align*}
    \alpha\wedge_B Q_{\mathbb{O}}\wedge Id(e_1\wedge \ldots \wedge e_8)&=4\sum\limits_{i<j<k}\frac{1}{q(e_i)q(e_j)q(e_k)}B(\alpha(e_i,e_j,e_k),\psi_{\mathbb{O}}(e_i,e_j,e_k))q(e_2)^2q(e_3)^2q(e_5)^2
\end{align*}
and so
$$\alpha\wedge_B Q_{\mathbb{O}}\wedge Id=-\frac{1}{56}B_{{\rm Alt}(\mathbb{O},\mathbb{O})}(\alpha,\psi_{\mathbb{O}})Q_{\mathbb{O}}\wedge Q_{\mathbb{O}}.$$

\noindent
$b)$ Let $\alpha\in{\rm Alt}_2(\mathbb{O},\hh)$. We have
$$B_{{\rm Alt}(\mathbb{O},\hh)}(\alpha,\mu_{\mathbb{O}})Q_{\mathbb{O}}\wedge Q_{\mathbb{O}}(e_1\wedge\ldots\wedge e_8)=-224\sum\limits_{i<j}\frac{1}{q(e_i)q(e_j)}B_{\hh}(\alpha(e_i,e_j),\mu_{\mathbb{O}}(e_i,e_j))q(e_2)^2q(e_3)^2q(e_5)^2.$$
On the other hand
\begin{align*}
    \alpha\wedge_{B_{\hh}} Q_{\mathbb{O}}\wedge \mu_{\mathbb{O}}(e_1\wedge \ldots\wedge e_8)&=\frac{1}{96}\sum\limits_{\sigma\in S_8}sgn(\sigma)B_{\hh}(\alpha(e_{\sigma(1)},e_{\sigma(2)}),\mu_{\mathbb{O}}(e_{\sigma(3)},e_{\sigma(4)}))Q_{\mathbb{O}}(e_{\sigma(5)},e_{\sigma(6)},e_{\sigma(7)},e_{\sigma(8)}).
\end{align*}
Since $\alpha, \mu_{\mathbb{O}}$ and $Q_{\mathbb{O}}$ are alternating and using the decomposition of Equation \eqref{decomp Q so(7)}, we have
$$\alpha\wedge_{B_{\hh}} Q_{\mathbb{O}}\wedge \mu_{\mathbb{O}}(e_1\wedge \ldots\wedge e_8)=\sum\limits_{\sigma\in S}sgn(\sigma)B_{\hh}(\alpha(e_{\sigma(1)},e_{\sigma(2)}),\mu_{\mathbb{O}}(e_{\sigma(3)},e_{\sigma(4)}))Q_{\mathbb{O}}(e_{\sigma(5)},e_{\sigma(6)},e_{\sigma(7)},e_{\sigma(8)})$$
where
$$S=\lbrace\sigma\in S_8 ~ | ~ \sigma(1)<\sigma(2),\quad \sigma(3)<\sigma(4),\quad (\sigma(5),\sigma(6),\sigma(7),\sigma(8))\in \lbrace (1,2,3,4),\ldots,(5,6,7,8) \rbrace \rbrace.$$
We have $|S|=84$ and, using Equation \eqref{eq moment map O of cyclic sum}, a straightforward calculation gives
$$\alpha\wedge_{B_{\hh}} Q_{\mathbb{O}}\wedge \mu_{\mathbb{O}}(e_1\wedge \ldots\wedge e_8)=4\sum\limits_{i<j}\frac{1}{q(e_i)q(e_j)}B_{\hh}(\alpha(e_i,e_j),\mu_{\mathbb{O}}(e_i,e_j))q(e_2)^2q(e_3)^2q(e_5)^2,$$
and so
$$\alpha\wedge_{B_{\hh}} Q_{\mathbb{O}}\wedge \mu_{\mathbb{O}}(e_1\wedge \ldots\wedge e_8)=-\frac{1}{56}B_{{\rm Alt}(\mathbb{O},\hh)}(\alpha,\mu_{\mathbb{O}})Q_{\mathbb{O}}\wedge Q_{\mathbb{O}}.$$
\end{proof}

\begin{rem}
One can show similarly that, up to a constant,
the identity is the Hodge dual of $Q_{\mathbb{O}}\wedge \psi_{\mathbb{O}}$.
\end{rem}

\footnotesize
\bibliographystyle{alpha}
\bibliography{ExceptionalLSAs}
\end{document}